\newcommand{\T}{{\cal T}}
\newcommand{\C}{{\cal C}}
\newcommand{\Real}{\mathbb R}
\newcommand{\tm}{\T M}
\newcommand{\TM}{\mathcal T\hspace{-1pt}M}
\def\pa{\partial}
\def\paa{\dot{\partial}}
\numberwithin{equation}{section} %% Comment out for sequentially-numbered
\numberwithin{figure}{section} %% Comment out for sequentially-numbered
\theoremstyle{plain}
\newtheorem*{theorem*}{Theorem}
\newtheorem{theorem}{Theorem}[section]
\newtheorem{lemma}[theorem]{Lemma}
\newtheorem{proposition}[theorem]{Proposition}
\newtheorem{corollary}[theorem]{Corollary}
\newtheorem{remark}[theorem]{Remark}
\newtheorem{definition}[theorem]{Definition}
\newtheorem{example}{Example}
\newtheorem*{acknowledgement*}{Acknowledgement}
\numberwithin{equation}{section}
\newcommand\overcirc[1]{\raisebox{10pt}{\tiny{$\circ$}}{\kern-7.5pt}\mbox{$#1$}}
\newcommand\undersym[2]{\raisebox{-6pt}{$#2$}{\kern-5pt}\mbox{$#1$}}
\newcommand\overdiamond[1]{\raisebox{10pt}{\small$\star$}{\kern-7.5pt}\mbox{$#1$}}
\newcommand\overast[1]{\raisebox{10pt}{\small$\ast$}{\kern-7.5pt}\mbox{$#1$}}
\newcommand\overlind[1]{\raisebox{10pt}{\small$\overline{{\hspace{2pt}}\star}$}{\kern-7.5pt}\mbox{$#1$}}
\newcommand\overlinc[1]{\raisebox{10pt}{\small$\overline{{\hspace{2pt}}\circ}$}{\kern-7.5pt}\mbox{$#1$}}
\newcommand\overlina[1]{\raisebox{10pt}{\small$\overline{{\hspace{1pt}}\ast}$}{\kern-7.5pt}\mbox{$#1$}}
\newcommand\undersymm[2]{\raisebox{-7pt}{\tiny$#2$}{\kern-15pt}\mbox{$#1$}}
\begin{document}

\title{\bf Parallel one forms on special Finsler manifolds }
\author{S. G. Elgendi}
\date{}

\maketitle
\vspace{-1 cm}

\begin{center}
{Department of Mathematics, Faculty of Science,\\
Islamic University of Madinah, Madinah, Saudi Arabia}
\vspace{-8pt}
\end{center}

\begin{center}
selgendi@iu.edu.sa, salahelgendi@yahoo.com
\end{center}

\vspace{0.3cm}
\begin{abstract} 
In this paper, we investigate  the existence of parallel 1-forms on specific Finsler manifolds. We demonstrate that Landsberg manifolds admitting a parallel 1-form have a mean Berwald curvature of rank at most $n-2$. As a result,  Landsberg surfaces with parallel 1-forms are necessarily Berwaldian. 
We further establish that the metrizability freedom of the geodesic spray for Landsberg metrics with parallel 1-forms is at least $2$. We figure out that some special Finsler metrics do not admit a parallel 1-form. Specifically, no parallel 1-form is admitted for any Finsler metrics of non-vanishing scalar curvature, among them the projectively flat metrics with non-vanishing scalar curvature. Furthermore,  neither the general Berwald's metric nor  the non-Riemannian spherically symmetric metrics admit a parallel 1-form. Consequently, we observe that certain $(\alpha,\beta)$-metrics and generalized $(\alpha,\beta)$-metrics do not admit parallel 1-forms.  
\end{abstract}
 
%\medskip
\noindent{\bf Keywords: \/}\,Parallel 1-form; Landsberg manifolds; scalar curvature; general Berwald's metric; spherically symmetric metrics

%\medskip
\medskip\noindent{\bf MSC 2020:\/}  53B40; 53C60

%\bigskip

\section{Introduction}
~\par

 Parallel 1-forms find diverse applications in both Finsler (or Riemannian) geometry and physics, particularly in general relativity.
In Finsler geometry, parallel 1-forms play a significant role. For instance, within the class of $(\alpha,\beta)$-metrics, if the 1-form $\beta$ is parallel with respect to the Levi-Civita connection of the Riemannian metric $\alpha$, then the Riemannian metric $\alpha$ and the $(\alpha,\beta)$-metric share the same geodesic spray. Consequently, the $(\alpha,\beta)$-metric becomes a Berwald metric. Additionally, if $\beta$ is parallel, the Levi-Civita connection and the Cartan connection of the $(\alpha,\beta)$-metric coincide (see \cite{Percell, Shibata}).
From an application standpoint, in general relativity, if a metric $g$ admits a parallel vector field and satisfies the Einstein equations, then the energy-momentum tensor vanishes (see \cite{Mahara}).

In Riemannian geometry, a vector field is parallel if and only if its associated 1-form is parallel. This equivalence stems from the metricity  of the Levi-Civita connection, which implies that the covariant derivative of the metric tensor vanishes. However, in Finsler geometry, the situation is more complex, especially when  Finsler connection  is not metrical.

\bigskip

In \cite{Kozma-Elgendi}, L. Kozma and S. G. Elgendi delved into the concept of parallel 1-forms on Finsler manifolds. Specifically, considering a Berwald connection attached to  a Finsler space $(M,F)$, a 1-form $\beta=b_i(x)y^i$ is termed horizontally parallel (or simply, parallel) if and only if the Berwald horizontal covariant derivative of $b_i$ vanishes, i.e.,  $b_{i|j}=0$. For a Finsler space $(M,F)$, they explored the connection between the metrizability freedom of the geodesic spray of the Finsler structure  $F$ and the existence of parallel 1-forms on $(M,F)$. Furthermore, they employed Finslerian tools to discuss the presence of parallel 1-forms on both Riemannian and Finslerian manifolds.

 \bigskip
 
In this paper, we investigate  the existence of   parallel  1-forms on certain special Finsler spaces. First, we consider the Landsberg spaces admitting   parallel 1-forms. If $(M,F)$ is a  Landsberg   metric  and   provides    a  parallel 1-form, then the rank of the mean Berwald   curvature is at most $n-2$. As by-product, a  Landsberg surface that admits a parallel 1-form is Berwaldian. Moreover, if $(M,F)$ is a Landsberg manifold whose  geodesic spray is $S$,   then the metrizability freedom of $S$ is  at least $2$. 

The Finsler metrics with scalar curvature are the second type of special Finsler manifolds that we address in the present study. We prove that there is no parallel 1-form exist on  Finsler manifolds with non-vanishing scalar curvature. We consequently infer that no parallel 1-form can be admitted for any projectively flat Finsler metrics of non-vanishing scalar curvature. It is not enough for a Finsler metric $F$ to provide   a parallel 1-form merely to have a vanishing scalar curvature. Consider  the following   projectively flat metric  with zero flag curvature,  which is investigated and provided by Shen \cite{Shen-paper} 
\begin{align*}
\nonumber F(x, y)= & \left\{1+\langle a, x\rangle+\frac{\langle a, y\rangle-|x|^2\langle a, y\rangle}{\sqrt{|y|^2-|x|^2|y|^2+\langle x, y\rangle^2}+\langle x, y\rangle}\right\} \\
& \times \frac{\left(\sqrt{|y|^2-|x|^2|y|^2+\langle x, y\rangle^2}+\langle x, y\rangle\right)^2}{\left(1-|x|^2\right)^2 \sqrt{|y|^2-|x|^2|y|^2+\langle x, y\rangle^2}}  
\end{align*}
  admits no   parallel 1-forms,  where $|\cdot|$ (resp. $\langle \cdot , \cdot \rangle$) refers to the standard Euclidean norm (resp. inner product) on $\Real^n$.  By the way, since this metric generalizes Berwald's metric \cite{Berwald}, then we call it the general Berwld's metric.

Finally, we turn our attention to one of the most significant and diverse classes in Finsler geometry: the class of spherically symmetric   metrics. This class has several applications in both Finsler geometry and physics. Now, let  $F=u\phi(r,s)$ be  spherically symmetric Finsler metric admitting a parallel 1-form, then we     its  geodesic spray  is characterized by   the following special formulae of the functions $P$ and $Q$:
 \begin{equation*}
P=P(r,s), \quad Q=\frac{ s^2 f'(r)}{2r^3f(r)}-\frac{sP }{r^2}+\frac{1}{2r^2}
\end{equation*}
where  $b_i=f(r) x_i$,  $f':=\frac{d f}{d r}$, and   $f(r)$ is a     smooth function  of $r$.  Then, a question arises, precisely,\textit{ is this spray   metrizable?} We show that this  spray    is only Riemann metrizable. That is,   there is no  non-Riemannian  spherically symmetric metric    provides  a parallel 1-form.
 
 \medskip

The class of spherically symmetric metrics is  an example  of generalized $(\alpha,\beta)$-metrics, while the general Berwald metricis an example of $(\alpha,\beta)$-metrics. In conclusion, we demonstrate the existence of $(\alpha,\beta)$-metrics and generalized $(\alpha,\beta)$-metrics that do not admit parallel 1-forms.

\section{Preliminaries}

Let $M$ be a smooth manifold of $n$ dimensions, and let its tangent bundle $(TM,\pi_M,M)$, and let $(\TM,\pi,M)$ be a subbundle of non-zero tangent vectors. We use $(x^i, y^i)$ to represent the induced coordinates of $TM$, where $(x^i)$ is the local coordinate of a base point   $x\in M$ and $(y^i)$ represents the    tangent vectors      $y\in T_xM$, where $T_xM$ is tangent space at $x$. The tangent structure $J$ of $T M$ is a vector 1-form   defined locally by $J = \frac{\partial}{\partial y^i} \otimes dx^i$, where $\otimes $ is the tensor product of $\frac{\partial}{\partial y^i}$ and $dx^i$. The  Liouville or canonical     vector field $\C$ is a   vector field on $TM$ and is defined by $\C=y^i\frac{\partial}{\partial y^i}$.

\medskip

A spray  is a vector field $S$ given on the tangent bundle   $ T M $  such that     $JS = \C$,  and   $[\C, S] = S$. It can be written  locally as in the following  expression:
\begin{equation}
  \label{eq:spray}
  S = y^j \frac{\partial}{\partial x^j} - 2G^j\frac{\partial}{\partial y^j},
\end{equation}
where the functions  $G^j=G^j(x,y)$   are called  the spray coefficients. These functions  are smooth   and $2$-homogeneous   in $y$.

\medskip

A non-linear connection is defined by an  $n$-dimensional distribution $H$ on $\tm$, which is the complement of the vertical distribution $V\tm$.  So, for each $z \in \tm$,  we have the following direct sum  
$$T_z(\tm) = H_z(\tm) \oplus V_z(\tm).$$  
Each spray $S$  can be associated by   a canonical non-linear connection with  a horizontal and vertical projectors given as follows 
\begin{equation}
  \label{projectors}
    h=\frac{1}{2}  (Id + [J,S]), \,\,\,\,            v=\frac{1}{2}(Id - [J,S])
\end{equation}
 Locally, the horizontal  projector  $h$ and the vertical projector $v$ are  expressed by the formulae 
$$h=\frac{\delta}{\delta x^k}\otimes dx^k, \quad\quad v=\frac{\partial}{\partial y^k}\otimes \delta y^k,$$
$$\frac{\delta}{\delta x^k}=\frac{\partial}{\partial x^k}-N^i_k(x,y)\frac{\partial}{\partial y^i},\quad \delta y^k=dy^k+N^k_i(x,y)dx^i, \quad N^h_i(x,y)=\frac{\partial G^h}{\partial y^i},$$
where $N^k_i$ are the components  of  the nonlinear connection.  

\medskip

Let $K$ be  a vector
  $k$-form on $M$, that is, 
  $K:(\mathfrak{X}(M))^k\longrightarrow \mathfrak{X}(M)$. Each vector $k$-form $K$ induces    graded
  derivations of the Grassmann algebra of $M$, namely   $i_K$ and $d_K$ as
  follows:
$$i_K\varphi=0, \,\,\,\, i_Kd\varphi=d\varphi\circ K,$$
$$d_K:=[i_K,d]=i_K\circ d-(-1)^{k-1}di_K,$$
where $\varphi\in C^\infty(M)$, $d\varphi$ represents the differential of $\varphi\in C^\infty(M)$ .
As a special case, for a vector field $\xi\in \mathfrak{X}(M)$, then we have the Lie derivative $\mathcal{L}_\xi$   with respect to $\xi$ and  the interior product $i_{\xi}$     by $\xi$.

\medskip

The Jacobi endomorphism (or, Riemann curvature)  \cite{Grifone_1972} is defined by
$$\Phi=v\circ [S,h]=R^i_j\frac{\partial}{\partial y^i}\otimes dx^j=\left(2\frac{\partial G^i}{\partial x^j}-S(N^i_j)-N^i_kN^k_j \right)\frac{\partial}{\partial y^i}\otimes dx^j.$$

The    curvature $R$ of $S$ is defined by
\begin{displaymath}
  R=\frac{1}{2}[h,h]=\frac{1}{2}R^h_{jk}\frac{\partial}{\partial
    y^h}\otimes dx^j \wedge dx^k, \qquad R^h_{jk} =
  \frac{\delta
    G^h_j}{\delta x^k} - \frac{\delta G^h_k}{\delta x^j}.
\end{displaymath}
 One can see that $R^h_i=R^h_{ij}y^j$. For more details, refer to \cite{Salah_ams_1}, for example.
 
 We adopt  the notations
$${\partial}_i:=\dfrac{\partial}{\partial x^i}, \quad \dot{\partial}_i:=\dfrac{\partial}{\partial y^i}, \quad  \delta_i:=\dfrac{\delta}{\delta x^i}=\dfrac{\partial}{\partial x^i}-G^j_i(x,y)\dfrac{\partial}{\partial y^j}.$$
 The Berwald connection's coefficients  $ G^h_{ij}$  \cite{shen-book1}  are given by    $ G^h_{ij}=\dfrac{\partial G^h_j}{\partial y^i}$.

\begin{definition}
 A pair $(M,F)$ is termed a Finsler manifold (or, Finsler space), wherein $M$ denotes a smooth $n$-dimensional manifold and $F: TM \to \mathbb{R}$  with    the   properties:
 \begin{description}
   \item[(a)] $F$ is strictly positive and   smooth  on $\T M$.
   \item[(b)]  $F$ is positively $1$-homogeneous  in   $y$.
    \item[(c)] The metric tensor $g_{ij}= \paa_i\paa_j E $ has rank $n$, where  $E:=\frac{1}{2}F^2$ is the energy function.
 \end{description}
The function $F$ is known as a Finsler function (or structure, or metric).
 \end{definition}
 
The  Berwald  curvature tensor     $\mathcal{G}$,     and the Landsberg curvature tensor $\mathcal{L}$ are defined, respectively,  by
\begin{equation}
\label{Berwald_curv.}
\mathcal{G}=G^h_{ijk} dx^i\otimes dx^j\otimes dx^k\otimes\paa_h
\end{equation}
\begin{equation}
\label{Landsberg_Tensor}
\mathcal{L}=L_{ijk} dx^i\otimes dx^j\otimes dx^k,
\end{equation} where $L_{ijk}=-\frac{1}{2}F G^h_{ijk}\paa_hF$, $G^h_{ijk}=\paa_kG^h_{ij}$ ,  see \cite{Shen-book}.
  The mean Berwald curvature $E_{j k}$ of a spray $S$   is defined by \cite[Def. 6.1.2]{shen-book1} as follows
$$E_{j k}=\frac{1}{2} G^i_{i j k}=\frac{1}{2} \frac{\partial^3 G^i}{\partial y^i \partial y^j \partial y^k}.$$
 \begin{definition}
A Berwald space is a Finsler space $(M,F)$ where  the components $G^{h}_{ijk}$  of  Berwald curvature tensor vanishes identically. Similarly, a {Landsberg} space is a Finsler space    $(M,F)$  in which the components $L_{jkh}$ of the  Landsberg curvature tensor  vanishes identically.
\end{definition}

\section{Parallel 1-forms on Landsberg  manifolds}

In \cite{Kozma-Elgendi}, L. Kozma and the author of this article investigated and studied the concept of parallel 1-forms on Riemannian and Finsler manifolds. They characterized the existence of parallel 1-forms in general. Here, in this section, we begin to study the presence of parallel 1-forms on some specific Finsler spaces of interest. Precisely, we start with Landsberg metrics. Let us provide the definition of a parallel 1-form.

\begin{definition} \cite{Kozma-Elgendi}
Let $\beta=b_i(x) y^i$ be a 1-form on a Finsler space $(M, F)$. Then, $\beta$ is said to be a horizontally parallel (or simply, parallel) 1-form with respect to the attached Berwald connection to $F$ if $b_{i \mid j}=0$, where the symbol $\mid$ denotes the Berwald horizontal covariant derivative.
\end{definition}
Considering a parallel 1-form $\beta=b_i(x) y^i$, according to \cite{MZ_ELQ}, we observe that $\beta$ is a holonomy-invariant function on the slit tangent bundle $ \T M $, meaning $d_h \beta=0$. As a compatibility condition, $\beta$ must satisfy the property $d_R \beta=0$. Furthermore, considering that $\beta$ is a function on $ \T M $ and homogeneous of degree 1 in $y$, we have $d_C \beta=\beta$. Summarizing these facts, we conclude that the existence of a parallel 1-form $\beta=b_i(x) y^i$ on a Finsler space $(M, F)$ can be characterized by the following system:
$$d_h\beta=0,  \quad    d_\C\beta=\beta,$$
and additionally, the compatibility condition  $d_R\beta=0$,  where $d_h\beta(X)=hX(\beta)$ for all $X\in \mathfrak{X}(\T M)$, $d_R\beta=R(\beta)$, and $d_\C\beta=\C(\beta)$. 

\medskip

 The following lemma is required  for subsequent use.

\begin{lemma}\label{Lem:m_i_Not_0}
Consider a 1-form $\beta=b_i(x)y^i$    on a Finsler space $(M,F)$, then the  covector defined by 
$$m_j=b_j-\frac{\beta}{F}\ell_j,$$
where $\ell_j:=\paa_iF$, is non-vanishing on $\T M$, that is, $m_j\neq0$. 
\end{lemma}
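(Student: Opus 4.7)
The plan is to argue by contradiction: assume that $m_j$ vanishes identically on $\T M$, so that $b_j(x)=\frac{\beta}{F}\,\ell_j(x,y)$ for every $(x,y)$ with $y\neq 0$. The key asymmetry is that the left side depends only on $x$ while the right side depends genuinely on $y$ through $F$ and $\ell_j$; differentiating fiberwise should therefore produce a constraint that pins $\beta$ down.

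First I would record the well-known identity
\[
\paa_k \ell_j \;=\; \tfrac{1}{F}\bigl(g_{jk}-\ell_j\ell_k\bigr),
\]
which follows immediately from $g_{ij}=\paa_i\paa_j E$ together with $\paa_i E = F\ell_i$. Applying $\paa_k$ to the assumed relation $b_j=\frac{\beta}{F}\ell_j$, the left side vanishes (since $b_j$ depends only on $x$), while on the right a product-rule computation using $\paa_k\beta=b_k$ and $\paa_k F=\ell_k$ yields
\[
0 \;=\; \frac{b_k\ell_j}{F}-\frac{2\beta\,\ell_j\ell_k}{F^{2}}+\frac{\beta\,g_{jk}}{F^{2}}.
\]
Substituting the assumed equality $b_k=\frac{\beta}{F}\ell_k$ back into the first term collapses everything to
\[
\frac{\beta}{F^{2}}\bigl(g_{jk}-\ell_j\ell_k\bigr)\;=\;0,
\]
i.e.\ $\beta\,h_{jk}=0$, where $h_{jk}:=g_{jk}-\ell_j\ell_k$ is the angular metric.

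To conclude, I would invoke the standard fact that at every point of $\T M$ the angular metric $h_{jk}$ has rank $n-1\geq 1$, and in particular is not the zero matrix. Hence $\beta\equiv 0$ on $\T M$, which forces $b_j(x)\,y^j=0$ for all $y$, and therefore $b_j\equiv 0$ on $M$; this contradicts the standing assumption that $\beta$ is a nontrivial 1-form. The only real step of the argument is recognising the angular metric in the right-hand side after the fiber differentiation — once this identification is made, the conclusion follows at once from the rank of $h_{jk}$; the rest is bookkeeping with the homogeneity identities $\paa_k F=\ell_k$ and $\ell_k y^k=F$.
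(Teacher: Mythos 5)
Your proof is correct and follows essentially the same route as the paper: differentiate the assumed identity $b_j=\frac{\beta}{F}\ell_j$ fiberwise, recognise the angular metric $h_{jk}=F\ell_{jk}=g_{jk}-\ell_j\ell_k$ in the result, and use that it has rank $n-1\geq 1$. Your endgame is in fact marginally cleaner: from $\beta\,h_{jk}=0$ you conclude $\beta\equiv 0$ and hence $b_j\equiv 0$, whereas the paper divides by $\beta$ to get $h_{jk}=0$ and contradicts $n\geq 2$ — your version avoids assuming $\beta$ is pointwise nonzero, though both arguments implicitly require the 1-form to be nontrivial.
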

\begin{proof}
The proof is proceeded by contradiction. Let $m_j=0$, then we have
$$b_j-\frac{\beta}{F}\ell_j=0.$$
Taking the derivative with respect to $y^k$, we get
$$-\frac{1}{F}b_k\ell_j-\frac{\beta}{F} \ell_{jk}+\frac{\beta}{F^2}\ell_j\ell_k=0.$$
Substituting by $b_j=\frac{\beta}{F}\ell_j$ implies
$$-\frac{\beta}{F^2}\ell_k\ell_j-\frac{\beta}{F} \ell_{jk}+\frac{\beta}{F^2}\ell_j\ell_k=0.$$
Therefore, we conclude that the angular metric $h_{ij}=F\ell_{ij}=0$, where $\ell_{ij}=\paa_j\paa_iF$.  Contracting $h_{ij}=g_{ij}-\ell_i\ell_j=0$ by the components of the inverse metric tensor yields  $g^{ij}h_{ij}=g^{ij}(g_{ij}-\ell_i\ell_j)=n-1 =0$, that is, $n=1$ which is a contradiction.
\end{proof}

\begin{theorem}
Let $(M,F)$ be a  Landsberg space   admitting  a   parallel 1-form, then the rank of the mean Berwald curvature is at most $n-2$.
\end{theorem}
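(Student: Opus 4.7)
The plan is to exhibit two pointwise linearly independent vectors lying in the kernel of $E_{jk}$ and then invoke Lemma~\ref{Lem:m_i_Not_0} to secure their independence.

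First I translate the hypotheses into algebraic identities for the Berwald connection coefficients. Since $b_i=b_i(x)$ is independent of $y$, the parallelism $b_{i|j}=0$ reduces to $\pa_j b_i=b_h G^h_{ij}$, whose left-hand side is $y$-free. Applying $\paa_l$ therefore yields
\[
b_h G^h_{ijl}=0 \qquad (\forall\; i,j,l),
\]
so that $b_h$ annihilates the Berwald tensor in its upper slot. The Landsberg hypothesis gives an analogous annihilation: since $L_{ijk}=-\tfrac12 F G^h_{ijk}\ell_h=0$ and $y_h=F\ell_h$, we obtain $y_h G^h_{ijk}=0$. By Lemma~\ref{Lem:m_i_Not_0}, $m_j=b_j-\tfrac{\beta}{F}\ell_j\neq0$, which says precisely that the covectors $b_h$ and $y_h$ are pointwise linearly independent on $\T M$; in particular $m_h G^h_{ijk}=0$ as well.

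Next I extract two kernel directions of $E_{jk}=\tfrac12 G^i_{ijk}$. The first is automatic: $G^i_{ij}$ is $0$-homogeneous in $y$, so by Euler $y^k G^i_{ijk}=0$, whence $E_{jk}y^k=0$. For the second, I contract $E_{jk}$ with $b^k:=g^{kl}b_l$ and aim to show
\[
E_{jk}\,b^k=0,
\]
equivalently $E_{jk}m^k=0$ with $m^k:=g^{kl}m_l=b^k-\tfrac{\beta}{F^2}y^k$. To do so, rewrite $b^k G^i_{ijk}=g^{kl}g^{ih}b_l G_{h,ijk}$ with $G_{h,ijk}:=g_{hm}G^m_{ijk}$, and use the Landsberg cyclic identity $\delta_k g_{ij}=G_{i,kj}+G_{j,ki}$ together with its $\paa$-derivatives to relate this contraction (which pairs $b$ with a lower slot of $G$) back to upper-slot contractions of the form $b_l G^l_{\bullet\bullet\bullet}$ and $y_l G^l_{\bullet\bullet\bullet}$, each of which vanishes by the previous paragraph. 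With both $E_{jk}y^k=0$ and $E_{jk}b^k=0$ in hand, Lemma~\ref{Lem:m_i_Not_0} guarantees that $y^k$ and $b^k$ are not proportional at any point of $\T M$, so $\dim\ker E_{jk}\geq 2$ and $\mathrm{rank}\,E_{jk}\leq n-2$.

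The hard step is the second contraction $E_{jk}b^k=0$. The identities from the first paragraph give only \emph{upper-slot} annihilations of $G^h_{ijk}$, whereas the trace defining $E_{jk}$ pairs the upper slot with a lower one; bridging these two requires the Landsberg identity $\delta_k g_{ij}=G_{i,kj}+G_{j,ki}$. Outside the Landsberg class this reorganization fails, which is exactly why the theorem is restricted to Landsberg manifolds.
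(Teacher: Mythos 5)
Your skeleton is the same as the paper's: exhibit $y^k$ and $b^k$ as two pointwise independent null directions of $E_{jk}$ and get the independence from Lemma~\ref{Lem:m_i_Not_0} (the paper runs exactly your independence argument, contracting $\mu y^i+\lambda b^i=0$ with the angular metric). Your first paragraph, the Euler argument for $E_{jk}y^k=0$, and the final rank count are all correct. However, the one step that carries the theorem, $E_{jk}b^k=0$, is only sketched, and the mechanism you name does not close as stated. Applying $\paa_l$ to the Landsberg identity $\delta_k g_{ij}=G_{i,kj}+G_{j,ki}$ yields
\begin{equation*}
G_{ijkl}+G_{jikl}=2\,C_{ijl|k},\qquad G_{ijkl}:=g_{im}G^m_{jkl},
\end{equation*}
which controls only the part of the lowered Berwald tensor symmetric in its first two slots and leaves a remainder involving the horizontal derivative of the Cartan tensor; tracing it gives $E_{kl}=\tfrac12 I_{l|k}$ rather than anything that pairs with $b^k$ and reduces to your upper-slot annihilations. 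So the bridge from $b_hG^h_{ijk}=0$ to $b^kG^i_{ijk}=0$ is genuinely missing.

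The identity that does the job, and the one the paper invokes, is the \emph{complete} symmetry of $G_{hijk}:=g_{hm}G^m_{ijk}$ on a Landsberg manifold. You already wrote down its source without using it: apply $\paa_l$ to your relation $y_hG^h_{ijk}=0$, noting $\paa_l y_h=g_{hl}$, to get $G_{lijk}=-\,y_h\,\paa_l\paa_k\paa_j\paa_i G^h$, whose right-hand side is manifestly symmetric in all four lower indices. Total symmetry then moves $b$ from the upper slot to a lower one in one line,
\begin{equation*}
b^kG^i_{ijk}=g^{kl}g^{im}b_l\,G_{mijk}=g^{kl}g^{im}b_l\,G_{kijm}=g^{im}b_l\,G^l_{ijm}=0,
\end{equation*}
which is $E_{jk}b^k=0$. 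With that substitution for your appeal to $\delta_kg_{ij}=G_{i,kj}+G_{j,ki}$, your argument coincides with the paper's proof.
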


\begin{proof}
Let $(M,F)$ be a be a Landsberg space admitting a  parallel 1-form $\beta=b_iy^i$. Then,  by \cite{Kozma-Elgendi}, we have 
$$\ell_hG^h_{ijk}=0, \quad b_hG^h_{ijk}=0.$$
Since for a Landsberg manifold, the Berwald curvature satisfies the property that $G_{hijk}:=g_{\ell h}G^\ell_{ijk}$ is completely symmetric, then the property $b_hG^h_{ijk}=0$ implies that
 $$ b^iG^h_{ijk}=0.$$
As the mean Berwald curvature  $E_{ij}=\frac{1}{2} G^h_{hij}$, then we get
$$b^iE_{ij}=0.$$ 
 
Now, we show that $y^i$ and $b^i$ are independent. Assume the combination
$$\mu y^i+\lambda b^i=0 $$
for some functions $\mu$ and $\lambda$ on $\T M$. 
By contracting the above equation by $h_{ij}$, we have
$$\lambda (b_j-\frac{\beta}{F}\ell_j)=0.$$
By making use of Lemma \ref{Lem:m_i_Not_0},  we  obtain that $\lambda=0$ and hence $\mu=0$. That is, $y^i$ and $b^i$ are independent and thus the rank of  $E_{ij}$ is at most $n-2$.
\end{proof}

By utilizing the above theorem together with \cite[Theorem A]{crampin2}, we get  the following result.
\begin{theorem}
A Landsberg surface that admits a parallel 1-form is Berwaldian.
\end{theorem}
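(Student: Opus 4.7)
The proof plan is essentially a one-line chain built on the preceding theorem and a cited external result. Specialising the previous theorem to a surface (so $n=2$), one concludes that the rank of the mean Berwald curvature $E_{ij}$ is at most $n-2=0$, which forces $E_{ij}\equiv 0$ identically on $\T M$. In other words, the hypothesis ``Landsberg surface with a parallel 1-form'' automatically promotes the surface to a \emph{weakly Berwald} (i.e.\ $E$-flat) Landsberg surface.

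Once $E_{ij}=0$ is established, I would invoke Theorem A of \cite{crampin2}, which asserts that in dimension two a Landsberg metric with vanishing mean Berwald curvature is necessarily Berwald. Combining these two steps immediately delivers $G^h_{ijk}\equiv 0$, i.e.\ $(M,F)$ is Berwaldian. The logical structure is therefore
\begin{equation*}
\text{Landsberg} + \text{parallel 1-form} + (n=2)\ \Longrightarrow\ E_{ij}=0\ \stackrel{\text{\cite{crampin2}}}{\Longrightarrow}\ \text{Berwald}.
\end{equation*}

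There is no genuine obstacle to overcome beyond the already-proved rank estimate: all the geometric content (the unicorn-type rigidity in dimension two saying that weakly Berwald Landsberg surfaces are Berwald) is absorbed into the reference. The only thing to check carefully is that the numerical bound ``rank $\le n-2$'' is being applied consistently with the convention of $E_{ij}$ used in \cite{crampin2}, so that ``vanishing mean Berwald curvature'' in the cited statement matches the conclusion $E_{ij}=0$ derived here; this is a bookkeeping verification rather than a substantive step.
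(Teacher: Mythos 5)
Your proposal is correct and is exactly the paper's argument: the rank bound from the preceding theorem gives $E_{ij}=0$ when $n=2$, and Crampin's Theorem A then yields the Berwald property. The only cosmetic remark is that Crampin's result holds in arbitrary dimension (Landsberg plus vanishing mean Berwald curvature implies Berwald), so no dimension-two restriction on the cited theorem is needed.
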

The following result is a generalized version of \cite[Proposition 3.8]{Kozma-Elgendi}.
\begin{proposition}\label{F_beta_indep.}
Consider  a 1-form $\beta=b_i(x)y^i$   on  a Finsler manifold  $(M,F)$. Then,  the Finsler functions $F$ and $\overline{F}=F\varphi(\textbf{s}), \ \textbf{s}:=\frac{\beta}{F}$, defined on $M$,  are locally functionally independent.  Where $\varphi$ is an appropriate  non-constant, positive, and smooth   function on $\Real$.
\end{proposition}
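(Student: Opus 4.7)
The plan is to show that $F$ and $\overline F$ are locally functionally independent on $\T M$ by verifying that $dF\wedge d\overline F\not=0$ on some open subset, which is the standard rank-$2$ Jacobian criterion. The computation of $d\overline F$ reduces everything to two simpler conditions: the nontriviality of $\varphi'$ and the nontriviality of $dF\wedge d\beta$.

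First I would differentiate $\overline F=F\varphi(\mathbf s)$ using $\mathbf s=\beta/F$, obtaining
$$d\overline F=\varphi(\mathbf s)\,dF+F\varphi'(\mathbf s)\,d\mathbf s=\bigl(\varphi(\mathbf s)-\mathbf s\,\varphi'(\mathbf s)\bigr)dF+\varphi'(\mathbf s)\,d\beta,$$
so that
$$dF\wedge d\overline F=\varphi'(\mathbf s)\,dF\wedge d\beta.$$
Hence functional independence is equivalent to showing that $\varphi'(\mathbf s)\not=0$ and $dF\wedge d\beta\not=0$ somewhere (and hence on an open neighborhood). The first is immediate: since $\varphi$ is non-constant and smooth, $\varphi'$ is not identically zero, so there is an open interval on which $\varphi'$ does not vanish, and pulling back along $\mathbf s$ gives an open subset of $\T M$ on which $\varphi'(\mathbf s)\not=0$.

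The substantive step is to show $dF\wedge d\beta\not=0$, and this is exactly where Lemma \ref{Lem:m_i_Not_0} plays its role. Writing $dF=\partial_iF\,dx^i+\ell_i\,dy^i$ and $d\beta=\partial_ib_k\,y^k\,dx^i+b_i\,dy^i$, the $dy^i\wedge dy^j$-component of $dF\wedge d\beta$ is $\tfrac12(\ell_i b_j-\ell_j b_i)\,dy^i\wedge dy^j$. If this were to vanish, we would have $\ell_i b_j=\ell_j b_i$; contracting with $y^j$ and using $\ell_jy^j=F$ and $b_jy^j=\beta$ gives $b_i=\mathbf s\,\ell_i$, i.e.\ $m_i=b_i-\tfrac{\beta}{F}\ell_i=0$, contradicting Lemma \ref{Lem:m_i_Not_0}. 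Therefore $dF\wedge d\beta$ is nonzero, and combined with $\varphi'(\mathbf s)\not=0$ we conclude $dF\wedge d\overline F\not=0$, which proves local functional independence.

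There is no serious obstacle here; the only care needed is to work on $\T M$ rather than on $M$ (since $F$, $\beta$ are functions on the slit tangent bundle), and to invoke Lemma \ref{Lem:m_i_Not_0} at the right moment to rule out the degenerate case $b_i=\mathbf s\ell_i$.
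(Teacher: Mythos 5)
Your proof is correct and follows essentially the same route as the paper: expand $dF\wedge d\overline F$ to isolate $\varphi'(\mathbf s)\,dF\wedge d\beta$, then extract the $dy^i\wedge dy^j$ component $\ell_i b_j-\ell_j b_i$ and contract with $y^j$ to reach a contradiction. The only cosmetic difference is that you invoke Lemma \ref{Lem:m_i_Not_0} at the final step, whereas the paper rederives its content inline by differentiating $Fb_j-\beta\ell_j=0$ once more to force the angular metric to vanish; the two are equivalent.
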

\begin{proof}
Let   $F$ and $\overline{F}=\varphi(\textbf{s})F$ be two  functionally dependent functions. Then, the   $2$-form $d\overline{F}\wedge d F$ vanishes, that is we have 
$$d\overline{F}\wedge d F=\frac{\partial \varphi}{\partial \textbf{s}} \frac{1}{F} d\beta\wedge d F=0.$$
 Since $\varphi$ is    non-constant,  then
 $\frac{\partial \varphi}{\partial \textbf{s}}\neq 0$,  and thus $d\beta\wedge d {F}=0$. Moreover,  we have 
\begin{eqnarray*}
0=d\beta\wedge d {F} = \pa_i \beta \ \pa_j {F} \ dx^i\wedge dx^j
+  \paa_i \beta \ \paa_j {F} \ dy^i\wedge dy^j
+\left(\pa_i \beta \ \paa_j {F}-\pa_i {F} \ \paa_j \beta \right) dx^i\wedge dy^j.
\end{eqnarray*}
The above equation holds if and only if    each term   vanishes. In particular,  the combination or the term 
 $ \paa_i \beta \ \paa_j {F} \ dy^i\wedge dy^j$. This combination  vanishes if and only if  $ \paa_i \beta \ \paa_j {F}  $ is symmetric in $i$ and $j$, hence we have
 $$ \paa_i \beta \ \paa_j {F}  -\paa_j \beta \ \paa_i {F} ={\ell}_{i}b_j-{\ell}_jb_i=0, \quad {\ell}_i:=\paa_i {F}.$$
Then contraction the above equation  by $y^i$ implies   ${F} b_j-\beta{\ell}_j=0$. Then, taking the derivative with respect to $y^k$, keeping  the fact  that ${\ell}_{i}b_j={\ell}_jb_i$ in mind, we get
 $${F} \beta {h}_{jk}=0$$
  where ${h}_{jk}$ is the angular metric attached to  the Finsler structure ${F}$. Since all of the objects    ${h}_{jk}$,   ${F}$ and $\beta$ are non-vanishing,  then    a contradiction is attained.   
\end{proof}

\begin{theorem}\label{Th:Lands_parallel_form}
 Consider a Landsberg space $(M,F)$ with the geodesic spray $S$ and admitting a parallel 1-form.  Then, $S$ has  metrizability freedom at least $2$. 
\end{theorem}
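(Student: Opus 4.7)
The plan is to exhibit a second Finsler function, functionally independent of $F$, whose geodesic spray also equals $S$; combined with $F$ this immediately yields metrizability freedom at least $2$.

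Let $\beta=b_i(x)y^i$ denote the parallel 1-form on $(M,F)$, and set $\mathbf{s}:=\beta/F$. Motivated by the preceding proposition, the candidate will be $\overline{F}:=F\,\varphi(\mathbf{s})$, where $\varphi:\Real\to\Real_+$ is smooth, positive, and non-constant. Since $\mathbf{s}$ is $0$-homogeneous in $y$, $\overline{F}$ is automatically positive and positively $1$-homogeneous; strong convexity of $\overline{F}$ (hence regularity of its fundamental tensor) can be arranged by taking $\varphi$ sufficiently close to a positive constant, so that an entire one-parameter family of admissible $\varphi$ is available.

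The heart of the argument is to show that $\overline{F}$ has the same geodesic spray as $F$. First observe that $d_h\beta=0$ by hypothesis and $d_hF=0$ automatically, so $d_h\mathbf{s}=0$; in particular $S(\mathbf{s})=S(F)=0$. From the proof of the earlier theorem in this section, the Landsberg and parallelism hypotheses moreover furnish the two contractions $\ell_h G^h_{ijk}=0$ and $b_h G^h_{ijk}=0$. I would then compute the spray coefficients $\overline{G}^i$ of $\overline{F}^2=F^2\varphi^2(\mathbf{s})$ directly from the standard formula $2\overline{G}^i=\overline{g}^{ij}(y^k\paa_j\pa_k\overline{E}-\pa_j\overline{E})$ with $\overline{E}=\overline{F}^2/2$; derivatives of $\varphi(\mathbf{s})$ introduce only the combination $m_j/F=\paa_j\mathbf{s}$ studied in Lemma \ref{Lem:m_i_Not_0}, so every extra term reduces either to a multiple of $b_{i|j}y^j$ or to one of the two Berwald contractions above. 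Since all of these vanish, one concludes $\overline{G}^i=G^i$.

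Finally, Proposition \ref{F_beta_indep.} guarantees that $F$ and $\overline{F}$ are functionally independent; hence at least two functionally independent Finsler functions metrize the spray $S$, and its metrizability freedom is at least $2$. The main obstacle is the bookkeeping in the spray computation of the previous paragraph: one has to arrange the terms produced by $\varphi$ and its derivatives so that the Landsberg hypothesis and the parallelism of $\beta$ eliminate every remaining obstruction to $\overline{G}^i=G^i$. The other steps---verifying positivity, homogeneity, and regularity of $\overline{F}$, and invoking the preceding proposition for independence---are standard.
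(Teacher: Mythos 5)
Your proposal is correct and is essentially the paper's own argument: the paper performs the same $\beta$-change, specialized to the Randers form $\overline{F}=F+\beta$ (i.e.\ $\varphi(\mathbf{s})=1+\mathbf{s}$, suggested by adding the contractions $\ell_hG^h_{ijk}=0$ and $b_hG^h_{ijk}=0$), observes that parallelism of $\beta$ forces $\overline{G}^h_{ijk}=G^h_{ijk}$, and concludes via Proposition \ref{F_beta_indep.} and the metrizability-freedom results of Elgendi--Muzsnay, exactly as you do. The ``bookkeeping'' you flag as the main obstacle can be avoided entirely: since $d_hF=0$ and $d_h\beta=0$ give $d_h\overline{E}=0$ for the horizontal projector of $S$, the spray $S$ already satisfies the Euler--Lagrange equation of $\overline{E}$, so $\overline{G}^i=G^i$ follows for any admissible $\varphi$ without invoking the Berwald-curvature contractions.
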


\begin{proof}
Let $(M,F)$ be a be a Landsberg space and  admits  a horizontally parallel 1-form $\beta=b_iy^i$. Then we have 
$$\ell_hG^h_{ijk}=0, \quad b_hG^h_{ijk}=0.$$
Which read that $$(\ell_h+b_h)G^h_{ijk}=0.$$
If one considers $$\overline{\ell}_r:=\ell_r+b_r. $$
Contracting the above equation by $y^r$ implies  $\overline{F}=F+\beta$. That is,  we get a Randers change for $F$ by the parallel form $\beta$, moreover,  $\overline{F}$ has the same geodesic spray as $F$ because  $\beta$ is parallel, that is $\overline{G}^r_{ijk}=G^r_{ijk}$. So, we get a Randers metric $\overline{F}$ in which $\overline{\ell}_r\overline{G}^r_{ijk}=0$, which means that $\overline{F}$ is Landsberg. Moreover, by Proposition \ref{F_beta_indep.}, $F$ and $\overline{F}$ are functionally independent.  Then, by \cite{Mu-Elgendi}, the result follows.
\end{proof}

\section{Parallel 1-forms on Finsler metrics of scalar curvature}

Flag curvature is an important  object in Finsler geometry, analogous to sectional curvature in Riemannian geometry. Finsler metrics with scalar flag curvature are of particular interest.
In this section, we explore the existence of parallel 1-forms on Finsler metrics of scalar flag curvature. Let's present the following definition:
\begin{definition}\cite{shen-book1}
A Finsler space   $(M,F)$ is said to have  scalar flag curvature (or simply, scalar   curvature) if its Riemann curvature (Jacobi endomorphism) is given in the form
\begin{equation}
\label{Eq:Scalar_curv.}
R^h_i=K  (F^2\delta^h_i-y_i y^h),
\end{equation}
where $K(x,y)$ is a smooth function on $\T M$.
\end{definition}
Now, we have the   following  result.
\begin{theorem}
All Finsler spaces of non vanishing scalar curvature     admits no  parallel 1-forms.
\end{theorem}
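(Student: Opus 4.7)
The plan is to argue by contradiction, exploiting the compatibility condition $d_R\beta=0$ that any parallel 1-form must satisfy, together with the very restrictive form \eqref{Eq:Scalar_curv.} of the Jacobi endomorphism, to force the covector $m_j$ of Lemma \ref{Lem:m_i_Not_0} to vanish.

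Suppose, for contradiction, that $(M,F)$ has non-vanishing scalar curvature $K$ and admits a parallel 1-form $\beta=b_i(x)y^i$. As recalled just after the definition of a parallel 1-form, $\beta$, viewed as a $1$-homogeneous function on $\T M$, must satisfy the three conditions $d_h\beta=0$, $d_\C\beta=\beta$ and the compatibility $d_R\beta=0$. Since $R=\tfrac{1}{2}R^h_{jk}\,\paa_h\otimes dx^j\wedge dx^k$ is a vertical-valued $2$-form and $\paa_h\beta=b_h$, unpacking $d_R\beta=R(\beta)=0$ reduces to $R^h_{jk}\,b_h=0$, and contracting with $y^k$ together with $R^h_j=R^h_{jk}y^k$ yields the key identity
\[
R^h_j\, b_h=0.
\]

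Substituting the scalar curvature expression $R^h_j=K(F^2\delta^h_j-y_j y^h)$ into this identity gives $K(F^2 b_j - y_j \beta)=0$. Since $K$ is non-vanishing, I conclude $F^2 b_j=y_j\beta$ on the relevant open set, and using the standard identity $y_j=F\ell_j$ this is equivalent to $b_j=\tfrac{\beta}{F}\ell_j$, i.e., $m_j=0$. This contradicts Lemma \ref{Lem:m_i_Not_0}, and the proof is complete.

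The step requiring the most care is unpacking the abstract compatibility equation $d_R\beta=0$ into the local identity $R^h_{jk}\,b_h=0$; the rest is a clean contraction and an appeal to the lemma already established. If desired, one can bypass this Fr\"olicher--Nijenhuis manipulation by instead invoking the Ricci identity for the Berwald connection: differentiating $b_{i|j}=0$ once more and antisymmetrizing yields $b_r\,R^r_{ijk}=0$, and contracting with $y^k$ reproduces the same identity $R^h_j b_h=0$. Either route delivers the contradiction through Lemma \ref{Lem:m_i_Not_0}, and no further difficulty is expected.
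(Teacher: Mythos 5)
Your argument is correct and follows essentially the same route as the paper: both derive $R^h_j b_h=0$ from the compatibility condition $d_R\beta=0$, substitute the scalar-curvature form of the Jacobi endomorphism to obtain $KF^2 m_j=0$, and conclude via Lemma \ref{Lem:m_i_Not_0}. Your explicit use of $y_j=F\ell_j$ to identify the resulting covector with $m_j$ is a small clarification of a step the paper leaves implicit, but the proof is the same.
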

 \begin{proof}
 Let  $(M,F)$ be a Finsler space admitting a parallel 1-form $\beta$, then we have
 $$d_R\beta=0 \Longrightarrow R(\beta)=R^h_{jk}\paa_b\beta=R^h_{jk}b_h=0\Longrightarrow y^k R^h_{jk}b_h=R^h_jb_h=0.$$
 Now, by making use of \eqref{Eq:Scalar_curv.}, we obtain  
 $$R^h_{j}b_h=KF^2\left(b_j-\frac{\beta}{F^2} y_j\right)=KF^2m_j=0.$$
 Since both $K$ and $F$ are non-zero, then $m_j=0$. But by Lemma \ref{Lem:m_i_Not_0},  we get a contradiction and consequently, the Finsler manifold $(M,F)$ does not provide a parallel 1-form.
 \end{proof}
 
 Since every  projectively flat Finsler metric is of scalar curvature, then we have the following corollary.
 \begin{corollary}
A projectively flat Finsler metric with non-zero scalar curvature does not admit parallel 1-forms.
 \end{corollary}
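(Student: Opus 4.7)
The plan is essentially a one-line deduction from the theorem that immediately precedes the corollary. The only fact I need to invoke is the classical result (due to Berwald, see e.g.\ Shen's book referenced in the excerpt) that a projectively flat Finsler metric on an open subset of $\Real^n$ automatically has scalar flag curvature; in local coordinates its spray coefficients take the form $G^i = Py^i$ for some scalar function $P(x,y)$ positively $1$-homogeneous in $y$, and a direct computation of the Jacobi endomorphism shows that $R^h_i$ has the shape \eqref{Eq:Scalar_curv.} with $K = (P^2 - S(P))/F^2$.

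First I would state this recall as the input to the argument: for a projectively flat Finsler structure $F$, there exists a smooth function $K$ on $\TM$ such that $R^h_i = K(F^2\delta^h_i - y_iy^h)$, so $F$ belongs to the class of Finsler metrics of scalar curvature considered in the previous theorem. The hypothesis of the corollary then supplies $K\not\equiv 0$, which is precisely the non-vanishing scalar curvature condition of that theorem.

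Then I would apply the previous theorem verbatim: any parallel $1$-form $\beta = b_i(x) y^i$ must satisfy $d_R\beta = 0$, hence $R^h_j b_h = 0$, and substituting the scalar-curvature form of $R^h_j$ yields $K F^2 m_j = 0$ with $m_j = b_j - (\beta/F)\ell_j$. Since $K\neq 0$ and $F>0$ this forces $m_j = 0$, contradicting Lemma~\ref{Lem:m_i_Not_0}.

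There is no real obstacle here; the only subtlety worth flagging is whether one wants the conclusion in full generality or on the projectively flat chart. Since being of scalar curvature is a pointwise/tensorial condition and the impossibility argument is also pointwise (it produces a contradiction at any point where $K\neq 0$), the corollary is immediate from the theorem without any further analytic work.
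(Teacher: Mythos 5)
Your argument is correct and is exactly the paper's route: the corollary is deduced from the preceding theorem via the classical fact that every projectively flat Finsler metric has scalar flag curvature. The extra detail you supply (the form $G^i=Py^i$ and the resulting $K$, plus re-running the $KF^2m_j=0$ contradiction) is consistent with, but not needed beyond, the one-line deduction the paper gives.
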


Two examples of projectively flat metrics with  vanishing curvature are presented below. While the second example does not provide  a parallel 1-form, the first one does. This illustrates that the presence  of a parallel 1-form does not necessitate the vanishing of the flag curvature. 
\begin{example}
Let $|\cdot|$ and $\langle \cdot, \cdot \rangle$ denote the Euclidean norm and inner product on $\mathbb{R}^n$, respectively. Consider the Finsler metric $F(x,y)$ on the unit ball $\mathbb{B}^n$ defined by:
\begin{equation*}
 F(x,y)=\frac{\sqrt{1-|a|^2}}{(1+\langle a,x\rangle)^2}\sqrt{|y|^2-\frac{2\langle a,y \rangle\langle x,y \rangle }{1+\langle a,x\rangle}-\frac{(1-|x|^2)\langle a,y \rangle^2 }{1+\langle a,x\rangle}},
\end{equation*}
where $y\in T_x B^n=\mathbb{R}^n$, $a=(a_1,a_2,...,a_n)\in \mathbb{R}^n$ is a   fixed vector with $|a|<1$.  
 The spray coefficients $G^i$ of the geodesic spray of $F$ are  
 $$G^i=-\frac{\langle a,y \rangle}{1+\langle a,x \rangle}y^i. $$
By \cite{Kozma-Elgendi},  $F$ is a projectively flat metric with zero curvature, and admit  a parallel one  form $\beta=b_i(x)y^i$ defined  by the component $b_i$ as follows 
$$b_1(x)=\frac{c+c_\mu x^\mu}{(1+\langle a,x\rangle)^2},\quad b_\mu(x)=\frac{a_\mu b_1}{a_1}-\frac{c_\mu(1+\langle a,x\rangle)}{a_1(1+\langle a,x\rangle)^2},$$
where  $\mu =2,...,n$.  
\end{example}

\begin{example} 
Consider the class of  projectively flat metrics with zero flag curvature studied by Shen \cite{Shen-paper} and given by 
\begin{align}\label{Eq:Shen_class}
\nonumber F(x, y)= & \left\{1+\langle a, x\rangle+\frac{\langle a, y\rangle-|x|^2\langle a, y\rangle}{\sqrt{|y|^2-|x|^2|y|^2+\langle x, y\rangle^2}+\langle x, y\rangle}\right\} \\
& \times \frac{\left(\sqrt{|y|^2- |x|^2|y|^2+\langle x, y\rangle^2}+\langle x, y\rangle\right)^2}{\left(1-|x|^2\right)^2 \sqrt{|y|^2-\left(|x|^2|y|^2-\langle x, y\rangle^2\right)}} ,
\end{align}
with the geodesic spray   coefficients 
\begin{equation*}
  G^i=\mathcal{P}y^i=\frac{\sqrt{|y|^2-|x|^2|y|^2+\langle x,y\rangle^2}+\langle x,y\rangle}{1-|x|^2} y^i,
\end{equation*}
where $\mathcal{P}$ is the projective factor   given by
$$\mathcal{P}=\frac{\sqrt{|y|^2-|x|^2 |y|^2+\langle x,y\rangle^2}+\langle x,y\rangle}{1-|x|^2}.$$
This metric does not provide a parallel 1-form as it is shown below.
 \end{example}

\begin{remark} Since by choosing  $a=0$ in \eqref{Eq:Shen_class}, we get  the Berwald's metric  \cite{Berwald}
 \begin{equation}\label{Berwal's_Metric}
  F=\frac{(\sqrt{|y|^2-|x|^2 |y|^2+\langle x,y\rangle^2}+\langle x,y\rangle)^2}{(1-|x|^2)^2\sqrt{|y|^2-|x|^2 |y|^2+\langle x,y\rangle^2}},
\end{equation}
we call the class \eqref{Eq:Shen_class} the \textit{general Berwald's metric}.
\end{remark}

\begin{proposition}
The Berwald curvature $G^h_{ijk}$ of the general Berwald's metric \eqref{Eq:Shen_class} is given by
\begin{align}
\label{Eq:Berwald_curv.}
\nonumber G^h_{ijk}&=\frac{1}{L}\frac{1}{1-|x|^2}\left(  \delta_{ij} \delta^h_k+\delta_{jk} \delta^h_i+\delta_{ki} \delta^h_j\right)-\frac{1}{L^3}  \frac{1}{(1-|x|^2)^2}(y_i\delta_{jk}+y_j\delta_{ki}+y_k\delta_{ij})y^h\\
\nonumber &-\frac{1}{L^3}  \frac{1}{(1-|x|^2)^2}(y_iy_j\delta^h_k+y_jy_k\delta^h_i+y_ky_i\delta^h_j)-\frac{1}{L^3}  \frac{\langle x,y\rangle}{(1-|x|^2)^3}(x_i\delta_{jk}+x_j\delta_{ki}+x_k\delta_{ij})y^h\\
\nonumber &+\left(-\frac{1}{L^3}\frac{\langle x,y\rangle^2 }{(1-|x|^2)^4}+\frac{1}{L}\frac{1}{(1-|x|^2)^2}\right)(x_ix_j\delta^h_k+x_jx_k\delta^h_i+x_kx_i\delta^h_j)\\
&-\frac{1}{L^3}  \frac{\langle x,y\rangle}{(1-|x|^2)^3}\left( (x_iy_j+x_jy_i) \delta^h_k+(x_jy_k+x_ky_j) \delta^h_i+(x_ky_i+x_iy_k) \delta^h_j\right)\\
\nonumber &+\left(-\frac{3}{L^3}  \frac{\langle x,y\rangle}{(1-|x|^2)^4}+\frac{3}{L^5}  \frac{\langle x,y\rangle^3}{(1-|x|^2)^6}\right)  x_ix_jx_ky^h+\frac{3}{L^5}  \frac{1}{(1-|x|^2)^3}y_iy_jy_ky^h\\
\nonumber &+\frac{3}{L^5} \frac{\langle x,y\rangle}{(1-|x|^2)^4}\left(y_iy_jx_k+y_jy_kx_i+y_ky_ix_j\right)y^h\\
\nonumber &+\left(\frac{3}{L^5}  \frac{\langle x,y\rangle^2}{(1-|x|^2)^5}-\frac{1}{L^3}  \frac{1}{(1-|x|^2)^3}  \right)\left(y_ix_jx_k+y_jx_kx_i+y_kx_ix_j\right)y^h.
\end{align}
\end{proposition}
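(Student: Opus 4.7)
The plan is to compute the Berwald curvature directly from its definition by taking three successive vertical derivatives of $G^h = \mathcal{P} y^h$, where $\mathcal{P}$ is the projective factor in the statement. First I would introduce the shorthand $L := \sqrt{|y|^2 - |x|^2|y|^2 + \langle x,y\rangle^2}$, so that $L^2 = (1-|x|^2)|y|^2 + \langle x,y\rangle^2$ and $\mathcal{P} = (L + \langle x,y\rangle)/(1-|x|^2)$. A single vertical derivative then yields
$$\paa_i L = \frac{(1-|x|^2)y_i + \langle x,y\rangle\, x_i}{L},\qquad \paa_i\mathcal{P} = \frac{y_i + \mathcal{P}\, x_i}{L},$$
where I have used the identity $L + \langle x,y\rangle = (1-|x|^2)\mathcal{P}$ to absorb the $x_i$ term in the second formula.

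Next I would apply the Leibniz rule three times to $G^h = \mathcal{P} y^h$ to get
$$G^h_{ijk} = (\paa_k\paa_j\paa_i\mathcal{P})\,y^h + (\paa_j\paa_i\mathcal{P})\,\delta^h_k + (\paa_k\paa_i\mathcal{P})\,\delta^h_j + (\paa_k\paa_j\mathcal{P})\,\delta^h_i,$$
so the task reduces to computing the second- and third-order vertical derivatives $\mathcal{P}_{ij} := \paa_j\paa_i\mathcal{P}$ and $\mathcal{P}_{ijk} := \paa_k\paa_j\paa_i\mathcal{P}$. Each further differentiation of $\paa_i\mathcal{P} = L^{-1}(y_i + \mathcal{P} x_i)$ either generates a Kronecker term $L^{-1}\delta_{\cdot\cdot}$ from differentiating $y_\cdot$, or drops the exponent on $L$ by two via $\paa_\cdot L$, producing factors contracted against $(1-|x|^2)y_\cdot + \langle x,y\rangle\, x_\cdot$. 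Repeating this twice produces a fully symmetric combination in $(i,j,k)$ of monomials of the forms $\delta_{\cdot\cdot}$, $y_\cdot\delta_\cdot$, $x_\cdot\delta_\cdot$, $y_\cdot y_\cdot\delta^h_\cdot$, $x_\cdot y_\cdot\delta^h_\cdot$, $x_\cdot x_\cdot\delta^h_\cdot$, $y_\cdot y_\cdot y_\cdot\, y^h$, $y_\cdot y_\cdot x_\cdot\, y^h$, $y_\cdot x_\cdot x_\cdot\, y^h$, and $x_\cdot x_\cdot x_\cdot\, y^h$, each multiplied by appropriate powers of $L^{-1}$ and $(1-|x|^2)^{-1}$.

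The final step is to eliminate every residual occurrence of $\mathcal{P}$ in favour of $L$ and $\langle x,y\rangle$ via $(1-|x|^2)\mathcal{P} = L + \langle x,y\rangle$, so that only the combinations $L^{-1}$, $L^{-3}$, $L^{-5}$ and $(1-|x|^2)^{-k}$ survive in the coefficients. Symmetrizing over $(i,j,k)$ and grouping terms according to their monomial type then reproduces the eleven grouped expressions that make up \eqref{Eq:Berwald_curv.}.

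The proof is therefore entirely computational; the sole obstacle is combinatorial bookkeeping, namely verifying that every one of the symmetric monomials acquires precisely the coefficient displayed. A practical organizational device is to sort partial results first by the pair $(L^{-r},(1-|x|^2)^{-s})$ and then by degree in $x$ versus $y$. With this discipline, each cancellation between the terms arising from $\paa\mathcal{P}$ and those from $\paa L$ can be tracked independently, and the claimed expression is recovered unambiguously after one final symmetrization.
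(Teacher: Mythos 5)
Your proposal follows essentially the same route as the paper: write $G^h=\mathcal{P}y^h$, compute the successive vertical derivatives $\mathcal{P}_i$, $\mathcal{P}_{ij}$, $\mathcal{P}_{ijk}$, and assemble $G^h_{ijk}=\mathcal{P}_{ijk}y^h+\mathcal{P}_{ij}\delta^h_k+\mathcal{P}_{jk}\delta^h_i+\mathcal{P}_{ki}\delta^h_j$. The only caveat is notational: in the paper the symbol $L$ appearing in \eqref{Eq:Berwald_curv.} denotes $\sqrt{|y|^2-|x|^2|y|^2+\langle x,y\rangle^2}\,/(1-|x|^2)$ rather than the bare square root you use, so your intermediate coefficients must be rescaled by the appropriate powers of $1-|x|^2$ before they literally match the displayed formula.
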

\begin{proof}
For simplicity, let's write the projective factor  $\mathcal{P}$ in the form
$$\mathcal{P}=L+\frac{\langle x,y\rangle}{1-|x|^2}, \quad L:=\frac{\sqrt{|y|^2-|x|^2 |y|^2+\langle x,y\rangle^2}}{1-|x|^2}.$$
Now, taking the derivative of the projective factor $\mathcal{P}$ with respect to $y^i$, we have
$$\mathcal{P}_{i}:=\paa_i \mathcal{P}=\frac{1}{L}\left( \frac{y_i}{1-|x|^2}+\frac{\langle x,y\rangle x_i}{(1-|x|^2)^2}\right)+\frac{x_i}{1-|x|^2}.$$
Similarly, taking the derivative of  $\mathcal{P}_i$ with respect to $y^j$, we get
$$\mathcal{P}_{ij}:=\paa_i\mathcal{P}_j=-\frac{1}{L^3}\left( \frac{y_iy_j}{(1-|x|^2)^2}+\frac{\langle x,y\rangle(x_iy_j+x_jy_i)}{(1-|x|^2)^3}+\frac{\langle x,y\rangle^2 x_ix_j}{(1-|x|^2)^4}\right)+\frac{1}{L}\left( \frac{\delta_{ij}}{1-|x|^2}+\frac{x_ix_j}{(1-|x|^2)^2}\right).$$
Furthermore, taking the derivative of  $\mathcal{P}_{ij}$ with respect to $y^k$, we obtain the   formula
\begin{align*}
\mathcal{P}_{ijk}&=\paa_k \mathcal{P}_{ij}=\frac{3}{L^5} \frac{\langle x,y\rangle}{(1-|x|^2)^4}\left(y_iy_jx_k+y_jy_kx_k+y_ky_ix_j\right)\\
&+\left(\frac{3}{L^5}  \frac{\langle x,y\rangle^2}{(1-|x|^2)^5}-\frac{1}{L^3}  \frac{1}{(1-|x|^2)^3}  \right)\left(y_ix_jx_k+y_jx_kx_k+y_kx_ix_j\right)\\
&-\frac{1}{L^3}  \frac{1}{(1-|x|^2)^2}(y_i\delta_{jk}+y_j\delta_{ki}+y_k\delta_{ij})-\frac{1}{L^3}  \frac{\langle x,y\rangle}{(1-|x|^2)^3}(x_i\delta_{jk}+x_j\delta_{ki}+x_k\delta_{ij})\\
&+\left(-\frac{3}{L^3}  \frac{\langle x,y\rangle}{(1-|x|^2)^4}+\frac{3}{L^5}  \frac{\langle x,y\rangle^3}{(1-|x|^2)^6}\right)  x_ix_jx_k+\frac{3}{L^5}  \frac{1}{(1-|x|^2)^3}y_iy_jy_k.
\end{align*}
By substitution by the above formulae of $\mathcal{P}_{ij}$ and $\mathcal{P}_{ijk}$ in to the Berwald curvature 
$$G^h_{ijk}=\mathcal{P}_{ijk}y^h+\mathcal{P}_{ij}\delta^h_k+\mathcal{P}_{jk}\delta^h_i+\mathcal{P}_{ki}\delta^h_j$$
the result follows.
\end{proof}

\begin{theorem}\label{Th:Berwald's_Metric}
The general Berwald's metric    \eqref{Eq:Shen_class}    does not admit a parallel 1-form 
\end{theorem}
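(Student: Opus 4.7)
My plan is to assume, for contradiction, that a nontrivial parallel 1-form $\beta = b_i(x)y^i$ exists on the general Berwald metric, and to deduce $b_i\equiv 0$. The horizontal parallelism $\delta_j\beta = 0$ reads $\partial_j b_i(x)\,y^i = G^i_j(x,y)\,b_i(x)$; a single $y$-derivative produces the identity $\partial_i b_\ell = G^j_{i\ell}\,b_j$, which must hold identically in $y$. Since $G^i = \mathcal{P}\,y^i$ with $\mathcal{P}$ the projective factor, one has $G^j_{i\ell} = \mathcal{P}_{i\ell}\,y^j + \mathcal{P}_i\,\delta^j_\ell + \mathcal{P}_\ell\,\delta^j_i$, so contraction by $b_j$ reduces the parallelism condition to
\[
\partial_i b_\ell(x) \;=\; \mathcal{P}_{i\ell}\,\beta + \mathcal{P}_i\,b_\ell + \mathcal{P}_\ell\,b_i.
\]
Contracting this with $y^\ell$ and invoking the Euler-type identities $\mathcal{P}_{i\ell}\,y^\ell = 0$ and $\mathcal{P}_\ell\,y^\ell = \mathcal{P}$ yields the scalar equation $\partial_i b_\ell\,y^\ell = \mathcal{P}_i\,\beta + \mathcal{P}\,b_i$.

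Next I would split the right-hand side into rational and irrational parts in $y$. Writing $L := \sqrt{(1-|x|^2)|y|^2 + \langle x,y\rangle^2}/(1-|x|^2)$, one has $\mathcal{P} = L + \langle x,y\rangle/(1-|x|^2)$, and the formula for $\mathcal{P}_i$ from the proof of the preceding proposition presents $\mathcal{P}_i$ as a multiple of $1/L$ plus a rational term. Thus the scalar equation has the form $L\,b_i + L^{-1}M + R = \partial_i b_\ell\,y^\ell$ with $M,R$ rational in $(x,y)$. For generic $x$ the quadratic form $(1-|x|^2)|y|^2 + \langle x,y\rangle^2$ in $y$ has full rank and is not a perfect square, so $L$ is an irrational algebraic element of degree two over $\mathbb{R}(x,y)$; matching the $\{1,L\}$-components of the quadratic extension decouples the equation into two relations, and the $L$-free part, multiplied through by $(1-|x|^2)^2$, produces the polynomial identity in $y$
\[
\bigl((1-|x|^2)|y|^2 + \langle x,y\rangle^2\bigr)\,b_i + (1-|x|^2)\,y_i\,\beta + \langle x,y\rangle\,x_i\,\beta = 0.
\]

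Finally I would equate the coefficient of $y^k y^\ell$ in this degree-two identity. Restricting to three pairwise distinct indices $i,k,\ell$ (which is possible for $n\ge 3$) kills every Kronecker delta and leaves the purely algebraic relation
\[
2\,x_k x_\ell\,b_i + x_i x_k\,b_\ell + x_i x_\ell\,b_k = 0.
\]
At a generic point $x$ with $x_1,x_2,x_3$ nonzero, cycling $\{i,k,\ell\}$ over $\{1,2,3\}$ produces a $3\times 3$ linear system whose only solution is $b_1=b_2=b_3=0$ at $x$; further cycling forces each remaining $b_i(x)=0$. By continuity and density this yields $\beta\equiv 0$, the required contradiction. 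The case $n=2$ should be handled by a separate short argument working directly from $b_h G^h_{ijk}=0$ and the two-dimensional specialization of the Berwald curvature formula \eqref{Eq:Berwald_curv.}. The main obstacle is executing the rational/irrational decomposition cleanly in the presence of the many $L$-dependent terms; once that is set up, the concluding combinatorial algebra is elementary.
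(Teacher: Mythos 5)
Your argument is correct, and it takes a genuinely different route from the paper. The paper starts from the weaker consequence $b_hG^h_{ijk}=0$, substitutes the full third-order expression \eqref{Eq:Berwald_curv.} for the Berwald curvature, contracts with $\delta^{ij}$ and $x^k$, and is left with a degree-four polynomial identity in $y$ whose coefficients of $y_1^4,\dots,y_n^4$ must be summed and analysed. You instead work with the first-order parallelism condition $\partial_j b_\ell=G^i_{j\ell}b_i$ itself, contract with $y^\ell$ using the homogeneity identities $\mathcal{P}_{j\ell}y^\ell=0$, $\mathcal{P}_\ell y^\ell=\mathcal{P}$, and then exploit that $L$ is quadratic-irrational over the rational functions in $y$ (legitimate, since $(1-|x|^2)|y|^2+\langle x,y\rangle^2$ is positive definite of rank $n\ge 2$, hence not a perfect square); the $L$-free component yields the clean degree-two identity $\bigl((1-|x|^2)|y|^2+\langle x,y\rangle^2\bigr)b_i+(1-|x|^2)y_i\beta+\langle x,y\rangle x_i\beta=0$. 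This buys you a much shorter computation: no third $y$-derivative $\mathcal{P}_{ijk}$ is ever needed, and the concluding algebra is a $3\times 3$ system with determinant $4$. One improvement you missed: your case split at $n=2$ is unnecessary. Setting $y=e_i$ in your degree-two identity gives $2\bigl(1-|x|^2+x_i^2\bigr)b_i=0$ directly, and since $|x|<1$ the coefficient is positive, so $b_i=0$ for every $i$ and every $n\ge 2$ without touching the off-diagonal coefficients or the Berwald curvature at all. With that observation your proof is complete in all dimensions and is strictly simpler than the paper's.
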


\begin{proof}
Let $\beta=b_iy^i$ be a parallel 1-form. Then, we have the condition \cite{Kozma-Elgendi} $G^h_{ijk}b_h=0$. By \eqref{Eq:Berwald_curv.}, we have
\begin{align*}
0&=G^h_{ijk}b_h\\&=\frac{1}{L}\frac{1}{1-|x|^2}\left(  \delta_{ij} b_k+\delta_{jk} b_i+\delta_{ki} b_j\right)-\frac{1}{L^3}  \frac{1}{(1-|x|^2)^2}(y_i\delta_{jk}+y_j\delta_{ki}+y_k\delta_{ij})\beta\\
&-\frac{1}{L^3}  \frac{1}{(1-|x|^2)^2}(y_iy_jb_k+y_jy_kb_i+y_ky_ib_j)-\frac{1}{L^3}  \frac{\langle x,y\rangle}{(1-|x|^2)^3}(x_i\delta_{jk}+x_j\delta_{ki}+x_k\delta_{ij})\beta\\
&+\left(-\frac{1}{L^3}\frac{\langle x,y\rangle^2 }{(1-|x|^2)^4}+\frac{1}{L}\frac{1}{(1-|x|^2)^2}\right)(x_ix_jb_k+x_jx_kb_i+x_kx_ib_j)\\
&-\frac{1}{L^3}  \frac{\langle x,y\rangle}{(1-|x|^2)^4}\left( (x_iy_j+x_jy_i) b_k+(x_jy_k+x_ky_j) b_i+(x_ky_i+x_iy_k) b_j\right)\\
&+\left(-\frac{3}{L^3}  \frac{\langle x,y\rangle}{(1-|x|^2)^4}+\frac{3}{L^5}  \frac{\langle x,y\rangle^3}{(1-|x|^2)^6}\right)  x_ix_jx_k\beta+\frac{3}{L^5}  \frac{1}{(1-|x|^2)^3}y_iy_jy_k\beta\\
&+\frac{3}{L^5} \frac{\langle x,y\rangle}{(1-|x|^2)^4}\left(y_iy_jx_k+y_jy_kx_k+y_ky_ix_j\right)\beta\\
&+\left(\frac{3}{L^5}  \frac{\langle x,y\rangle^2}{(1-|x|^2)^5}-\frac{1}{L^3}  \frac{1}{(1-|x|^2)^3}  \right)\left(y_ix_jx_k+y_jx_kx_k+y_kx_ix_j\right)\beta.
\end{align*}

By contracting the above equation by $\delta^{ij}$ and combining like terms, we get the following:

\begin{align*}
0&=G^h_{ijk}b_h\delta^{ij}\\&= \frac{1}{A^4L^3}\left((n+2) A^3L^2 -A^2 u^2+r^2A^2L^2-r^2\langle x,y\rangle^2-2\langle x,y\rangle^2\right)b_k\\
&+\frac{1}{A^5L^5}\big{(} -(n+4) A^3L^2 \beta -2 A \langle x,b\rangle L^2  \langle x,y\rangle +2A^2\beta u^2+6 A \beta\langle x,y\rangle^2 +r^2  \beta(3 \langle x,y\rangle^2-A^2L^2)   \big{)}y_k\\
&+\frac{1}{A^6L^5}\big{(} -(n+2) A^3L^2 \beta   \langle x,y\rangle +2 A^4L^4 \langle x,b\rangle   -   2 A^2L^2\langle x,b\rangle \langle x,y\rangle^2-2A^2\beta L^2  \langle x,y\rangle  \\
&+3 r^2  \beta  \langle x,y\rangle^3-   3 r^2 A^2L^2  \beta  \langle x,y\rangle+3 A^2\beta u^2      \langle x,y\rangle+  
6A \beta \langle x,y\rangle^3-2 A^3L^2 \beta \langle x,y\rangle\big{)}x_k,
\end{align*}
where we use the notations $$A=1-|x|^2, \quad r=|x|, \quad u=|y|, \quad b=(b_1,b_2,\cdots,b_n).$$

Again, by contracting by $x^k$, we have
\begin{align*}
0&=G^h_{ijk}b_h\delta^{ij}x^k\\
&= \frac{1}{A^4L^3}\left((n+2) A^3L^2 -A^2 u^2+r^2A^2L^2-r^2\langle x,y\rangle^2-2\langle x,y\rangle^2\right)\langle x,b\rangle\\
&+\frac{1}{A^5L^5}\big{(} -(n+4) A^3L^2 \beta -2 A \langle x,b\rangle L^2  \langle x,y\rangle +2A^2\beta u^2+6 A \beta\langle x,y\rangle^2 +r^2  \beta(3 \langle x,y\rangle^2-A^2L^2)   \big{)}\langle x,y\rangle\\
&+\frac{1}{A^6L^5}\big{(} -(n+2) A^3L^2 \beta   \langle x,y\rangle +2 A^4L^4 \langle x,b\rangle   -   2 A^2L^2\langle x,b\rangle \langle x,y\rangle^2-2A^2\beta L^2  \langle x,y\rangle  \\
&+3 r^2  \beta  \langle x,y\rangle^3-   3 r^2 A^2L^2  \beta  \langle x,y\rangle+3 A^2\beta u^2      \langle x,y\rangle+  
6A \beta \langle x,y\rangle^3-2 A^3L^2 \beta \langle x,y\rangle\big{)}r^2,
\end{align*}
Then, we have the following algebraic equation 
\begin{equation}
\label{Eq:Polynomial}
 A_1 \langle x,b\rangle  u^4+ (A_2\beta    + A_3 \langle x,b\rangle  \langle x,y\rangle)  \langle x,y\rangle u^2+A_4 \langle x,b\rangle \langle x,y\rangle^4+A_5 \beta  \langle x,y\rangle^3=0,
\end{equation}
 
where
\begin{align*}
 A_1:&=((n-2)r^6-3(n-1)r^4+3n r^2-(n+1)),\\
 A_2:&=(-3r^6+(n+3)r^4-2(n+1) r^2+n+2)   ,\\
 A_3:&=(-2nr^4+(4n-1)r^2-(2n-1) )   ,\\
 A_4:&=( (n+2)r^2-(n-2) )   ,\\
 A_5:&=( 2r^4-(n-2)r^2+n-2 )   .\\
\end{align*}
The equation \eqref{Eq:Polynomial} represents a polynomial of degree $4$ in the $y^i$ . Since this polynomial holds for all values of the $y^i$ , all coefficients must vanish, including the coefficients of $y_1^4, y_2^4, \ldots, y_n^4$, which are listed respectively by:
\begin{align*}
 A_1 \langle x,b\rangle+ A_2 b_1 x_1+A_3   x_1^2 \langle x,b\rangle+A_4 x_1^4 \langle x,b\rangle +A_5b_1x_1^3=&0 \\
 A_1 \langle x,b\rangle+ A_2 b_2 x_2+A_3   x_2^2 \langle x,b\rangle+A_4 x_2^4 \langle x,b\rangle +A_5b_2x_2^3=&0 \\
  \cdots  \qquad  \cdots  \qquad \cdots\qquad    \cdots\qquad \cdots   \qquad    \cdots  \qquad &\\
 A_1 \langle x,b\rangle+ A_2 b_n x_n+A_3   x_n^2 \langle x,b\rangle+A_4 x_n^4 \langle x,b\rangle +A_5b_nx_n^3=&0 
\end{align*}
Summing the above $n$ equations, we obtain:
 $$n A_1 \langle x,b\rangle +A_2 \langle x,b\rangle+A_3 r^2 \langle x,b\rangle  + A_4 \rho^2 \langle x,b\rangle +A_5   \langle x+\xi,b\rangle =0,$$
 where $\xi=(x_1^2, x_2^2, \cdots, x_n^2)$ and $\rho=|\xi|$. Therefore, we can express the above equation as follows
  $$\langle( n A_1   +A_2  +A_3 r^2  + A_4 \rho^2   +A_5    ) x+A_5 \xi,b\rangle =0.$$
Since the above equation holds for all $x^i$, and  the case where $A_5=0$ and $n A_1   +A_2  +A_3 r^2  + A_4 \rho^2   +A_5=0 $ provides polynomial in $x^i$ that cannot hold for all  $x^i$,       we conclude that $b_i=0$, implying that $\beta=0$. This means that there exists no non-zero parallel 1-form, hence the proof is completed.
\end{proof}

\section{Parallel 1-forms on spherically symmetric metrics}

Spherically symmetric Finsler metrics are important in Finsler geometry and physics. We're looking at whether they can have parallel 1-forms. A  Finsler structure  $F$ on $\mathbb{B}^n(r_0) \subset \mathbb{R}^n$ is spherically symmetric when it has the form:
 $$F(x,y)=u\phi\left(r,s\right),$$
where $r=|x|$, $u=|y|$,  $s=\frac{\langle x, y \rangle}{|y|}$, and   $\phi:[0,r_0)\times\mathbb{R}^n\to \mathbb{R}$.

We  lower  the indices  in $y^i$ and $x^i$  using  the Kronecker  delta $\delta_{ij}$, which are the components of the metric tensor attached  to  the Euclidean norm, as follows:
$$y_i:=\delta_{ij} y^j, \quad x_i:=\delta_{ij} x^j.$$
That is, $y_i$ and $x_i$ coincide with  $y^i$ and $x^i$ respectively. Moreover, we have
$$y^iy_i=u^2, \quad x^ix_i=r^2, \quad y^ix_i=x^iy_i=\langle x,y \rangle.$$
For further details, refer to, for example, \cite{Elgendi-SSM,Elgendi-Note,Guo-Mo}.

The following derivatives will prove useful in subsequent calculations.
\begin{equation}
\label{Eq:derivatives}
    \begin{split}
        \frac{\partial r}{\partial x^k}= \frac{1}{r}x_k, \quad  \frac{\partial u}{\partial y^k}= \frac{1}{u}y_k,  \quad  \frac{\partial u}{\partial x^k}= 0,         \quad  \frac{\partial r}{\partial y^k}= 0,  \quad
         \frac{\partial s}{\partial x^k}= \frac{1}{u}y_k, \quad  \frac{\partial s}{\partial y^k}= \frac{1}{u}(x_k-\frac{s}{u}y_k).   \\
    \end{split}
\end{equation}

The  coefficients  $G^i$  of the geodesic spray  of   the  Finsler metric  $F=u\phi(r,s)$ are expressed as follows:  
\begin{equation}\label{G}
  G^i=uPy^i+u^2 Qx^i,
\end{equation}
where   $P$ and $Q$ are given  by
\begin{equation}\label{P,Q}
  P:=-\frac{Q}{\phi}(s\phi +(r^2-s^2)\phi_s)+\frac{1}{2r\phi}(s\phi_r+r\phi_s),
\end{equation}
\begin{equation}\label{Q}
 Q:=\frac{1}{2 r}\frac{ -\phi_r+s\phi_{rs}+r\phi_{ss}}{\phi-s\phi_s+(r^2-s^2)\phi_{ss}}, 
\end{equation}
where the subscripts $s$ (resp. $r$) denote  the derivative   with respect to $s$ (resp. $r$).

The components  $G^i_j$ of the nonlinear connection   of  $F$ are  

\begin{equation}
\label{Eq:G^i_j}
G^i_j=u P \delta^i_j+P_sx_jy^i+  \frac{1}{u}(P-sP_s) \ y_jy^i+u Q_s x^ix_j+(2Q-sQ_s)  x^iy_j.
\end{equation}

 Computing the functions $P$ and $Q$ enables us to determine the geodesic sprays generated by the coefficients $G^i$ given in \eqref{G} for the spherically symmetric   metric $F = u \phi$. Conversely, the inverse problem involves recovering the Finsler metric from specified functions $P$ and $Q$. For solving the inverse problem, we have the following  lemma which obtained in \cite{Elgendi-SSM}.
 
\begin{lemma}\cite{Elgendi-SSM}
Given arbitrary functions $P(r, s)$ and $Q(r, s)$, let $F = u\phi(r, s)$ be a Finsler function whose geodesic spray is determined by $P$ and $Q$. Then, the function $\phi$ must satisfy the following two PDEs:

\begin{equation}
\label{Comp_C_C_2}
    \begin{split}
       (1+sP-(r^2-s^2)(2Q-sQ_s))\phi_s+(s P_s-2P-s(2Q-sQ_s))\phi &=0,   \\
         \frac{1}{r}\phi_r-(P+Q_s(r^2-s^2))\phi_s-(P_s+sQ_s) \phi &=0.
    \end{split}
\end{equation}
\end{lemma}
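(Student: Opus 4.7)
The plan is to derive both first-order PDEs directly from the spray coefficient formulas \eqref{P,Q} and \eqref{Q}. The starting observation is that \eqref{P,Q}, after multiplication by $2r\phi$ and rearrangement, is equivalent to the relation
\begin{equation*}
\frac{s\phi_r}{r}+\bigl(1-2(r^2-s^2)Q\bigr)\phi_s-2(P+sQ)\phi=0,\qquad (*)
\end{equation*}
which is nothing but the statement $SF=0$ for the spray $S=y^i\partial_i-2G^i\dot\partial_i$ with $G^i=uPy^i+u^2Qx^i$. Indeed, a direct computation using $\partial_k F=\tfrac{u}{r}\phi_r\,x_k+\phi_s\,y_k$ and $\dot\partial_k F=\phi_s\,x_k+\tfrac{1}{u}(\phi-s\phi_s)\,y_k$, together with the contractions $y^jy_j=u^2$, $x^jx_j=r^2$, $x^jy_j=us$, gives $SF/u^2$ equal to the left-hand side of $(*)$.

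Next, I would differentiate $(*)$ with respect to $s$, producing a second-order equation containing $\phi_{rs}$ and $\phi_{ss}$. To eliminate the second derivatives, I would use the formula \eqref{Q}, which, after clearing the denominator and dividing by $r$, takes the form
\begin{equation*}
\frac{s\phi_{rs}}{r}+\bigl(1-2(r^2-s^2)Q\bigr)\phi_{ss}=2Q(\phi-s\phi_s)+\frac{\phi_r}{r}.
\end{equation*}
Substituting this identity into the differentiated equation causes the $\phi_{rs}$- and $\phi_{ss}$-terms to collapse, and after collecting the coefficients of $\phi$ and $\phi_s$ a short computation yields
\begin{equation*}
\frac{1}{r}\phi_r-\bigl(P+Q_s(r^2-s^2)\bigr)\phi_s-(P_s+sQ_s)\phi=0,
\end{equation*}
which is the second PDE of \eqref{Comp_C_C_2}.

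Finally, I would obtain the first PDE of \eqref{Comp_C_C_2} by forming the linear combination $(*)-s\cdot\text{(second PDE)}$. The $\phi_r$-contributions cancel identically, and grouping the remaining terms one arrives at
\begin{equation*}
\bigl(1+sP-(r^2-s^2)(2Q-sQ_s)\bigr)\phi_s+\bigl(sP_s-2P-s(2Q-sQ_s)\bigr)\phi=0,
\end{equation*}
as required. The main obstacle is the bookkeeping in the middle step: after differentiating $(*)$ one must carefully track the coefficients involving $Q$, $sQ_s$, $(r^2-s^2)Q_s$, $P$, and $P_s$ in order to see that substitution from \eqref{Q} produces precisely the advertised second PDE; once that is handled, the derivation of the first PDE via linear combination is essentially automatic.
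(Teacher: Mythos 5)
Your derivation is correct: I checked the algebra, and both PDEs of \eqref{Comp_C_C_2} come out exactly as you describe. Note, however, that the paper does not prove this lemma at all --- it is imported verbatim from \cite{Elgendi-SSM} --- so there is no in-paper proof to compare against line by line. The closest analogue is the computation inside the proof of Theorem \ref{Th:SSM}, which performs precisely your middle step: it rewrites \eqref{P,Q} as \eqref{Eq:New_P} (your equation $(*)$ up to an overall factor of $-r$), differentiates with respect to $s$, and eliminates $\phi_{rs}$ and $\phi_{ss}$ by means of \eqref{Q}. So your route is the same one the author uses elsewhere, merely organized as a proof of the lemma. One worthwhile observation: if you carry out the differentiation of \eqref{Eq:New_P} carefully, the $\phi_s$-coefficient is $2r\bigl(P-sQ+(r^2-s^2)Q_s\bigr)$, whereas the paper records $2r\bigl(P+(r^2-s^2)Q_s\bigr)$ and consequently ends up with a spurious $sQ$ term in \eqref{Eq:Revision_3}; your computation, which reproduces the second PDE of \eqref{Comp_C_C_2} exactly, is the correct one. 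The only point I would make fully explicit is the justification of $(*)$: you should state that the geodesic spray of $F$ annihilates $F$ (equivalently, the energy is a first integral of the geodesic flow), so that \eqref{P,Q} is literally $SF=0$ rearranged --- but your computation of $SF/u^2$ already supplies this.
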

We shall refer to the aforementioned conditions as the 'metrizability conditions', as they directly determine the metrizability of the spray.

\begin{proposition} Assume that  $F=u\phi(r,s)$  is    spherically symmetric. Then, $F$ admits a parallel 1-form $\beta=b_iy^i$ if and only if  its   geodesic spray   is determined   by  the functions 
 \begin{equation}
\label{Eq:Metrizabilty_S}
P=P(r,s), \quad Q=\frac{ s^2 f'(r)}{2r^3f(r)}-\frac{sP }{r^2}+\frac{1}{2r^2}
\end{equation}
where  $b_i=f(r) x_i$,  $f':=\frac{d f}{d r}$, and     $f(r)$ is a  smooth  function of $r$.
\end{proposition}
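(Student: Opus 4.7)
The plan is to translate the horizontal-parallel condition $d_h\beta = 0$ into local coordinates, substitute the spherically symmetric data, and reduce the outcome to an explicit ODE for $Q$.

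First I would use the local form $\delta_j \beta = 0$ of the parallel condition, which reads
\begin{equation*}
\frac{\partial b_i}{\partial x^j}\, y^i = G^i_j\, b_i.
\end{equation*}
Substituting the ansatz $b_i = f(r) x_i$, so that $\beta = f(r)\langle x,y\rangle = f(r)\,us$ and $x^i b_i = r^2 f(r)$, the left-hand side becomes (via \eqref{Eq:derivatives})
\begin{equation*}
\frac{\partial b_i}{\partial x^j}\, y^i = \frac{f'(r)\,u s}{r}\, x_j + f(r)\, y_j,
\end{equation*}
while plugging $b_i = f(r) x_i$ into the five-term formula \eqref{Eq:G^i_j} collapses the right-hand side into a combination of $x_j$ and $y_j$ only:
\begin{equation*}
G^i_j\, b_i = u\,f(r)\bigl[P + sP_s + r^2 Q_s\bigr]\, x_j + f(r)\bigl[(P - sP_s)\,s + (2Q - sQ_s)\,r^2\bigr]\, y_j.
\end{equation*}

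Matching the coefficients of the (generically independent) covectors $x_j$ and $y_j$ yields the pair of scalar equations
\begin{align*}
\frac{s\, f'(r)}{r\, f(r)} &= P + sP_s + r^2 Q_s,\\
1 &= (P - sP_s)\,s + (2Q - sQ_s)\,r^2.
\end{align*}
The first of these is a first-order ODE for $Q$ in the variable $s$. Observing that $P + sP_s = \partial_s(sP)$, integrating with respect to $s$ gives
\begin{equation*}
Q = \frac{s^2\, f'(r)}{2\, r^3\, f(r)} - \frac{s P}{r^2} + C(r)
\end{equation*}
for an integration constant $C(r)$ depending on $r$ alone. Substituting this back into the second equation, every term containing $P$ and $f$ cancels cleanly, leaving $2 r^2\, C(r) = 1$, whence $C(r) = \tfrac{1}{2 r^2}$. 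This is precisely the expression in \eqref{Eq:Metrizabilty_S}. The converse follows by running the same computation in reverse, and the compatibility condition $d_R\beta = 0$ is then automatic from $d_h^2 = d_R$.

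The delicate point will be justifying the ansatz $b_i = f(r) x_i$ in the forward direction. The natural argument is that a parallel 1-form on a spherically symmetric Finsler manifold should inherit the underlying $O(n)$-symmetry of the structure, forcing $b_i(x)$ to be a radial multiple of $x_i$. Alternatively, one can analyze the full equation $\delta_j \beta = 0$ at generic $y$: since $P$ and $Q$ are generically nontrivial functions of $(r,s)$, the only way the $y$-dependent right-hand side can match the linear-in-$y$ left-hand side for all $y$ is if $b_i$ takes this radial form. Once that step is in place, the remaining integration outlined above is immediate.
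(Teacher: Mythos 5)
Your proposal is correct and follows essentially the same route as the paper: write $\delta_j\beta=0$ in coordinates, insert $b_i=f(r)x_i$ and the five-term formula for $G^i_j$, split into the $x_j$- and $y_j$-components, and solve the resulting pair of scalar equations for $Q$ (the paper forms the combination $(\text{second})+s\cdot(\text{first})$ to get $Q$ algebraically rather than integrating in $s$ and fixing $C(r)$, but this is the same computation). Regarding the ``delicate point'' you flag, the paper handles it by taking $b_i=b_i(r)$ as given by spherical symmetry and then observing that a Berwald-parallel 1-form is closed (since $G^k_{ij}$ is symmetric in $i,j$), hence locally the gradient of a function $h(r)$, which forces $b_i=\tfrac{h'(r)}{r}x_i=f(r)x_i$.
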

\begin{proof}
Let $F$ be given by   $F=u\phi(r,s)$ such that $F$    admits   a parallel  1-form   $\beta$. Then $\beta$ can be written in the form $\beta=b_i(r)y^i$. 
Since $\beta$ is parallel, then $b_i$ must be gradient, that is, there is a function $h(r)$ such that $b_i=\frac{\partial h}{\partial x^i}$. Therefore, we can write
$$b_i=\frac{\partial h}{\partial x^i}=\frac{d h}{d r } \frac{\partial r}{\partial x^i}=\frac{1}{r}\frac{d h}{d r }x_i=f(r)x_i$$
where we set $f(r)=\frac{1}{r}\frac{d h}{d r}$. Moreover, we have
$$\beta=b_iy^i=f(r)x_iy^i=f(r)\langle x,y\rangle=f(r) su.$$
Now,   the condition $d_h\beta=0$ implies
$$\delta_i\beta=\pa_j (b_iy^i)-G^i_j \paa_i \beta=0.$$
Using   \eqref{Eq:derivatives} and plugging  \eqref{Eq:G^i_j} into the above equation, we have 
$$u\left(\frac{s f' }{r}-sf P_s-fP-fr^2Q_s \right)x_i+(f-fsP+fs^2P_s-2fr^2Q+fsr^2Q_s)y_i=0,$$
which implies the equations
\begin{equation}
\label{Eq:SSS_1}
\frac{sf'}{r}-sf P_s-fP-fr^2Q_s=0,
\end{equation}
\begin{equation}
\label{Eq:SSS_2}
f-fsP+fs^2P_s-2fr^2Q+fsr^2Q_s=0.
\end{equation} 
Adding  \eqref{Eq:SSS_2} to the mulitple of \eqref{Eq:SSS_1} by $s$, we obtain
\begin{equation}
\label{Eq:SSS_3}
\frac{s^2f'}{r}+f-2fsP-2fr^2Q=0.
\end{equation}
By making use of \eqref{Eq:SSS_3}, we have
$$Q=\frac{ s^2 f'}{2r^3f}-\frac{sP }{r^2}+\frac{1}{2r^2}$$
where $P$ is arbitrary  and this completes the proof.
\end{proof}

\begin{theorem}\label{Th:SSM}
The spray determined by the functions $P$ and $Q$ given in \eqref{Eq:Metrizabilty_S} is only Riemann  metrizable. Therefore,  the non-Riemannian spherically symetric metrics do not admit   parallel 1-forms.
\end{theorem}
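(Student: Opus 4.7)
The plan is to substitute the expression for $Q$ from \eqref{Eq:Metrizabilty_S} into the metrizability system \eqref{Comp_C_C_2} and show that every Finsler metrization $\phi(r,s)$ of the resulting spray must satisfy $\phi^2 = a(r) + b(r)s^2$ for some smooth functions $a,b$, i.e.\ corresponds to a Riemannian structure.

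First I would compute the auxiliary quantities $Q_s$, $sQ_s$, $2Q-sQ_s$ and $(r^2-s^2)(2Q-sQ_s)$. A direct check reveals that the explicit $f$-dependence cancels in $2Q-sQ_s$: the $\tfrac{s^2 f'}{2r^3 f}$ term in $Q$ contributes $\tfrac{s^2 f'}{r^3 f}$ to both $2Q$ and $sQ_s$, producing
\[
r^2(2Q - sQ_s) = 1 - sP + s^2 P_s.
\]
Substituting this into the first condition of \eqref{Comp_C_C_2} and clearing denominators gives a linear first-order PDE in $\phi$ and $\phi_s$ whose coefficients involve only $r,s,P,P_s$; the function $f$ enters only through the second condition of \eqref{Comp_C_C_2}, which relates $\phi_r$ to $\phi_s$, $\phi$, and $f$ via $Q_s$.

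Next, passing to $\psi:=\phi^2$, the first PDE becomes a homogeneous linear equation in $\psi$ and $\psi_s$ of the schematic shape
\[
\bigl(s^2+sP(2r^2-s^2)-s^2(r^2-s^2)P_s\bigr)\psi_s + 2\bigl((r^2-s^2)sP_s+(s^2-2r^2)P-s\bigr)\psi = 0.
\]
The goal is to show that this, together with the second metrizability equation, forces $\psi$ to be a polynomial of degree at most two in $s$. The cleanest route appears to be to differentiate the first equation with respect to $s$ and eliminate the unknown $P$ and $P_s$ between the two metrizability conditions, producing an overdetermined constraint depending only on $\psi$, $f$, and $r$. One expects this constraint to take the form $\psi_{sss}=0$ (or an equivalent third-order vanishing), whence $\psi = a(r)+b(r)s + c(r)s^2$; symmetry of the spray under $s\mapsto -s$ combined with the structure of the metric then forces $b(r)\equiv 0$, yielding the Riemannian form.

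Once $\phi^2=a(r)+b(r)s^2$ is established, substituting back into both equations of \eqref{Comp_C_C_2} produces a consistent pair of ODEs tying $a,b$ to $f$ (and determining $P$), so the spray is indeed Riemann metrizable; the metric $F=u\phi$ is then the pull-back of a rotationally invariant Riemannian metric. The second claim of the theorem follows immediately: any non-Riemannian spherically symmetric $F$ admitting a parallel 1-form would, by Proposition preceding the theorem, metrize a spray of the form \eqref{Eq:Metrizabilty_S}, contradicting the Riemann-only metrizability just proved. The main obstacle I anticipate is the elimination step — producing a clean overdetermined constraint on $\psi$ after differentiating and combining the two PDEs, rather than merely verifying that Riemannian solutions fit.
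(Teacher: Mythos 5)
Your opening computation is correct: with $Q$ as in \eqref{Eq:Metrizabilty_S} one indeed gets $r^2(2Q-sQ_s)=1-sP+s^2P_s$, and this matches the paper's starting point. But your rewriting hides the decisive structural fact, and the plan breaks on it. After this substitution the first equation of \eqref{Comp_C_C_2} is not merely ``a linear first-order PDE in $\phi$ and $\phi_s$ whose coefficients involve $P$'': it \emph{factors} as
\[
\bigl(-s(r^2-s^2)P_s+(2r^2-s^2)P+s\bigr)\Bigl(s\frac{\phi_s}{\phi}-1\Bigr)=0,
\]
and your displayed $\psi$-equation is exactly $A\,(s\psi_s-2\psi)=0$ with $A$ the first factor. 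Hence either $s\phi_s=\phi$, i.e.\ $\phi=c(r)s$ (the degenerate branch, which you never exclude), or $A\equiv 0$, in which case the first metrizability condition is \emph{identically satisfied} and carries no information about $\phi$ at all. Your central step --- differentiate the first equation in $s$ and eliminate $P,P_s$ to force $\psi_{sss}=0$ --- therefore cannot work: in the only nontrivial case the equation you propose to differentiate is $0=0$. The paper instead solves $A=0$ as a linear ODE in $s$, obtaining $P=-s/r^2+c_1(r)s^2/\sqrt{r^2-s^2}$, and then feeds this and \eqref{Eq:Metrizabilty_S} back into the \emph{defining} relations \eqref{P,Q} and \eqref{Q} of $P,Q$ in terms of $\phi$ (which involve $\phi_{ss}$ and are not captured by the first condition); eliminating $\phi_r$ and $\phi_{ss}$ yields $\bigl(\text{coefficient in }r,s,f,c_1\bigr)\cdot\phi_s=0$, and a degree count in $s$ shows the coefficient vanishes only if $f=0$. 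Hence $\phi_s=0$ and $F=\psi(r)u$ is Riemannian.

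Two further gaps. First, your appeal to a symmetry $s\mapsto-s$ to kill the linear term $b(r)s$ in $\psi$ is unjustified: nothing in \eqref{Eq:Metrizabilty_S} makes the spray reversible, since $P(r,s)$ is a priori arbitrary. Second, the degenerate branch $\phi=c(r)s$ fits your target form $\psi=a+bs+cs^2$ (with $a=b=0$) yet gives a metric tensor of rank $<n$, so even establishing ``$\psi$ quadratic in $s$'' would not by itself prove Riemann metrizability; the case analysis and the use of \eqref{P,Q}--\eqref{Q} are genuinely needed.
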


\begin{proof}
By substituting by the formula   \eqref{Eq:Metrizabilty_S} of   $P$ and $Q$  into the metrizability conditions \eqref{Comp_C_C_2}, we have
\begin{equation}
\label{Eq_Phi_s}
(1+sP-(r^2-s^2)(2Q-sQ_s))\frac{\phi_s}{\phi}+ s P_s-2P-s(2Q-sQ_s) =0,
\end{equation}
Substituting   the expressions of $P$ and $Q$ into the above formula, we have
$$  \left(-sP_s(r^2-s^2)+P(2r^2-s^2)+s\right)\left(s \frac{\phi_s}{\phi}-1\right)=0.$$
Now, we have two cases either   $\frac{\phi_s}{\phi}=\frac{1}{s}$ or 
$$-sP_s(r^2-s^2)+P(2r^2-s^2)+s=0.$$
If $\frac{\phi_s}{\phi}=\frac{1}{s}$, then we get $\phi=c(r) s$.
Hence, the Finsler metric function $F$ is given by
$$F=u\phi=c(r) su=c(r) \langle x,y\rangle.$$
As the inner product $\langle x,y \rangle$ is linear in $y$, so is the formula for $F$. This implies that the metric tensor $g_{ij}$ is degenerate,  consequently,  the spray is non-Finsler metrizable.

Now, assume that $$-sP_s(r^2-s^2)+P(2r^2-s^2)+s=0.$$
Rewriting the above equation as follows
$$ P_s- \frac{2r^2-s^2}{s(r^2-s^2)}P =\frac{1}{r^2-s^2}.$$
Which can be seen as a linear differential equation with the solution
$$P=-\frac{s}{r^2}+\frac{c_1(r) s^2}{\sqrt{r^2-s^2}}$$
where $c_1(r)$ is a   function of $r$.

To determine  the function $\phi$, let's rewrite \eqref{P,Q} as follows
\begin{equation}
\label{Eq:New_P}
(2(r^2-s^2) Q-1) r\phi_s -s \phi_r +2r (P+sQ)\phi=0.
\end{equation}
 Differentiating both sides of the preceding equation with respect to $s$ yield
$$(2(r^2-s^2) Q-1) r\phi_{ss}-s\phi{rs}-\phi_r+2(P+(r^2-s^2) Q_s)r\phi_s+2r(P_s+Q+sQ_s)\phi=0.$$
Rewriting \eqref{Q} as follows
$$(2(r^2-s^2) Q-1) r\phi_{ss}-s \phi_{rs}+\phi_r+2 rQ \phi -2r s Q   \phi_s=0.$$
Subtracting the above two equations, we get
\begin{equation}
\label{Eq:Revision_3}
\left(P+sQ+(r^2-s^2)Q_s\right)r\phi_s-\phi_r+r(P_s+sQ_s)\phi=0
\end{equation}
By substituting by $\phi_r$ form \eqref{Eq:Revision_3} into \eqref{Eq:New_P}, we have
$$\left(2(r^2-s^2) Q-1-s(P+sQ+(r^2-s^2)Q_s\right)r\phi_s+\left(2rP+2rs Q-rs(P_s+sQ_s)\right)\phi=0.$$
Substituting   $P$ and $Q$ which are given in \eqref{Eq:Metrizabilty_S}, we get 
$$ (-2 f c_1 r^2s^3+(rs^2f'+r^2f+2s^2f)\sqrt{r^2-s^2}) \phi_s =0.$$
 If $ -2 f c_1 r^2s^3+(rs^2f'+r^2f+2s^2f)\sqrt{r^2-s^2}f =0$, then we have
$$-2 f c_1 r^2s^3=-(rs^2f'+r^2f+2s^2f)\sqrt{r^2-s^2}.$$
Squaring both sides of the preceding equation and collecting like terms yields
$$(4 c_1r^4f^2+r^2f'^2+4rff'+4f^2)s^6-r^3(rf'^2+2ff')s^4-4rfr^4(2rf'+3f)s^2-r^6f^2=0.$$
Which is a polynomial of degree $6$ in $s$ and satisfied for all values of $s$. That is,  we get
 $f=0$ and hence  $\beta=0$ which is the trivial case. If $\phi_s=0$, then $\phi=\psi(r)$. Thus, the Finsler function $F$ is given by
$$F=u\phi=\psi(r) u$$
which is Riemannian.  
\end{proof}

\section*{Conclusion}
We end this work with the following observations:

$\bullet$ As discussed at the beginning of Section 3, the presence of a parallel 1-form  $\beta $ on a Finsler space $(M, F)$  is  characterized by the   system:
$$d_h\beta=0,  \quad    d_\C\beta=\beta$$
As well as the compatibility condition $d_R\beta=0$. 
 Alternatively, we can consider the parallel property with respect to the horizontal covariant derivative of the Berwald connection. It's worth noting that we obtain the same concept even if we use the horizontal covariant derivative with respect to the Cartan, Chern, or Hashiguchi connections.  This is because the fundamental four connections have the same components $R^h_{ij}$ of  h(v)-torsion, see \cite[Table 1]{gomaa}.
 
\medskip 
 
$\bullet$ The class of $(\alpha,\beta)$-metrics comprises a Riemannian structure  $\alpha$ and a 1-form $\beta$. If $\beta$ is parallel with respect to $\alpha$ (the Levi-Civita connection), then any $(\alpha,\beta)$-metric $F$ inherits certain geometric properties from the background metric $\alpha$. For example, $\alpha$ and $F $ share the same geodesic spray and connection coefficients. Therefore, one can conclude that a parallel 1-form $\beta$ allows for the construction of numerous Finsler metrics that share the same geodesic spray.

\medskip  

$\bullet$ The presence of a parallel 1-form $\beta$ on a Riemannian space $(M,\alpha)$ not only preserves certain geometric properties but also has its own impact on the Finsler space constructed by any $(\alpha,\beta)$-metrics. For instance, Landsberg manifolds admitting a parallel 1-form exhibit a mean Berwald curvature of rank at most $n-2$. As a result, Landsberg surfaces with parallel 1-forms are necessarily Berwaldian. Additionally, the metrizability freedom of the geodesic spray for Landsberg metrics with parallel 1-forms is at least $2$.

\medskip 

$\bullet$ We figure out that some special Finsler metrics do not admit a parallel 1-form. Specifically, no parallel 1-form is admitted for any Finsler metrics of non-vanishing scalar curvature, among them the projectively flat metrics with non-vanishing scalar curvature. Furthermore,  neither the general Berwald's metric nor  non-Riemannian  spherically symmetric metrics admit a parallel 1-form. Consequently, we observe that certain $(\alpha,\beta)$-metrics and generalized $(\alpha,\beta)$-metrics do not admit parallel 1-forms.

 %%%%%%%%%%%%%%%%%%%%%%%%%%%%%%%%%%%%%%%%%%%%%%%%%%%%%%%%%%%%%%%%%%%%%%%%%%%%%%%%%%%%%%
 
 \providecommand{\bysame}{\leavevmode\hbox
to3em{\hrulefill}\thinspace}
\providecommand{\MR}{\relax\ifhmode\unskip\space\fi MR }
% \MRhref is called by the amsart/book/proc definition of \MR.
\providecommand{\MRhref}[2]{%
  \href{http://www.ams.org/mathscinet-getitem?mr=#1}{#2}
} \providecommand{\href}[2]{#2}

\end{document}